\DeclareSymbolFont{bbold}{U}{bbold}{m}{n}
\DeclareSymbolFontAlphabet{\mathbbold}{bbold}
\newcommand{\N}{\mathbb{N}}
\newcommand{\Z}{\mathbb{Z}}
\newcommand{\Q}{\mathbb{Q}}
\newcommand{\R}{\mathbb{R}}
\newcommand{\C}{\mathbb{C}}
\newcommand{\1}{\mathbbold{1}}
\newcommand{\loc}{\mathrm{loc}}
\newcommand{\lu}{\mathrm{unif}}
\newcommand{\unif}{\mathrm{unif}}
\newcommand{\Mloc}{\mathcal{M}_{\loc}(\R)}
\newcommand{\M}{\mathcal{M}_{\loc,\unif}(\R)}
\renewcommand{\c}{{\rm c}}
\renewcommand{\r}{{\rm r}}
\renewcommand{\i}{{\rm i}}
\newcommand{\Dir}{{\rm D}}
\newcommand{\Neu}{{\rm N}}
\newcommand{\SL}{\mathrm{SL}}
\newcommand{\F}{\mathrm{F}}
\newcommand{\ckern}{c\mkern1mu}
\newcommand{\uD}{\@ifnextchar1{\u@DN{\Dir}{\mu_1}\@gobble}{\u@DN{\Dir}{\mu}}}
\newcommand{\uN}{\@ifnextchar1{\u@DN{\Neu}{\mu_1}\@gobble}{\u@DN{\Neu}{\mu}}}
\newcommand{\u@DN}[2]{u_{#1\mkern-1mu,\mkern1mu#2}}
\DeclareMathOperator{\spt}{spt}
\DeclareMathOperator{\diam}{diam}
\renewcommand{\Re}{\operatorname{Re}}
\providecommand{\form}{\tau}
\providecommand{\scpr}[2]{\left( #1 \,\middle|\, #2 \right)}
\renewcommand{\sp}{\scpr}
\newcommand{\from}{\colon}
\newcommand\rfrac[2]{\tfrac{#1}{\raisebox{0.1em}{$\scriptstyle#2$}}}
\newcommand\bigstrut{\rule{0pt}{2.2ex}}
\newcommand\intstrut{\rule{0pt}{2.7ex}}
\let\eps\varepsilon
\let\phi\varphi
\let\le\leqslant
\let\leq\leqslant
\let\ge\geqslant
\let\geq\geqslant
\def\@row#1,{#1\@ifnextchar;{\@gobble}{&\@row}}
\def\@matrix{%
    \expandafter\@row\my@arg,;%
    \@ifnextchar({\\ \get@in@paren{\@matrix}}{\after@matrix}%
    }
\def\matrixtype#1#2#3{%
    \ifmmode\def\after@matrix{\end{#2}\right#3}%
    \else\def\after@matrix{\end{#2}\right#3$}$\fi\iffalse$\fi
    \left#1\begin{#2}\get@in@paren{\@matrix}%
    }
\def\@column#1,{#1\@ifnextchar;{\@gobble}{\\ \@column}}
\newcommand\vect{}
\def\svect(#1){\left(\begin{smallmatrix}\@column#1,;\end{smallmatrix}\right)}
\def\vect{\get@in@paren{\@vect}}
\def\@vect{\left(\begin{matrix}\expandafter\@column\my@arg,;\end{matrix}\right)}
\def\get@in@paren#1({\def\my@arg{}\def\my@rest{}\def\after@get{#1}\get@arg}
\let\e@a\expandafter
\def\get@arg#1){\e@a\kl@test\my@rest#1(;}
\def\kl@test#1(#2;{\e@a\def\e@a\my@arg\e@a{\my@arg#1}%
                   \ifx:#2:\let\my@exec\after@get
                   \else\let\my@exec\get@arg
                        \e@a\def\e@a\my@arg\e@a{\my@arg(}%
                        \def@rest#2;%
                   \fi\my@exec}
\def\def@rest#1(;{\def\my@rest{#1\kl@zu}}
\def\kl@zu{)}
\newcommand\MyPairedDelimiter{%
  \@ifstar{\My@Paired@Delimiter{{}}}
          {\My@Paired@Delimiter{}}%
}
\newcommand\My@Paired@Delimiter[4]{%
  \newcommand#2{%
    \@ifstar{\start@PD{#1}{\delimitershortfall=-1sp}{#3}{#4}}
            {\start@PD{#1}{}{#3}{#4}}%
  }%
}
\newcommand\start@PD[5]{%
  #1\mathopen{\mathpalette\put@delim@helper{\put@delim{#2}{#3}{.}{#5}}}%
  #5%
  \mathclose{\mathpalette\put@delim@helper{\put@delim{#2}{.}{#4}{#5}}}%
}
\newcommand\put@delim@helper[2]{%
  \hbox{$\m@th\nulldelimiterspace=0pt #2#1$}%
}
\newcommand\put@delim[5]{%
  \setbox\z@\hbox{$\m@th#5{#4}$}%
  \setbox\tw@\null
  \ht\tw@\ht\z@ \dp\tw@\dp\z@
  #1#5%
  \left#2\box\tw@\right#3%
}
\MyPairedDelimiter*{\abs}{\lvert}{\rvert}
\MyPairedDelimiter*{\norm}{\lVert}{\rVert}
\MyPairedDelimiter{\set}{\{}{\}}
\newcommand\llim{
\mathchoice{\vcenter{\hbox{${\scriptstyle{-}}$}}}
{\vcenter{\hbox{$\scriptstyle{-}$}}}
{\vcenter{\hbox{$\scriptscriptstyle{-}$}}}
{\vcenter{\hbox{$\scriptscriptstyle{-}$}}}}
\newcommand\rlim{
\mathchoice{\vcenter{\hbox{${\scriptstyle{+}}$}}}
{\vcenter{\hbox{$\scriptstyle{+}$}}}
{\vcenter{\hbox{$\scriptscriptstyle{+}$}}}
{\vcenter{\hbox{$\scriptscriptstyle{+}$}}}}
\theoremstyle{plain} % default
\newtheorem{theorem}{Theorem}[section]
\newtheorem{corollary}[theorem]{Corollary}
\newtheorem{lemma}[theorem]{Lemma}
\newtheorem{proposition}[theorem]{Proposition}
\theoremstyle{definition}
\newtheorem{example}[theorem]{Example}
\newtheorem*{definition}{Definition}
\newtheorem{remark}[theorem]{Remark}
\newcommand{\avoidbreak}{\postdisplaypenalty50}
\begin{document}

\medmuskip=4mu plus 2mu minus 3mu
\thickmuskip=5mu plus 3mu minus 1mu
\belowdisplayshortskip=9pt plus 3pt minus 5pt

\title{A weak Gordon type condition for absence of eigenvalues of one-dimensional Schr\"odinger operators}

\author{Christian Seifert and Hendrik Vogt}

\maketitle

\begin{abstract}
  We study one-dimensional Schr\"odinger operators with complex measures as potentials
  and present an improved criterion for absence of eigenvalues
  which involves a weak local periodicity condition.
  The criterion leads to sharp quantitative bounds on the eigenvalues.
  We apply our result to quasiperiodic measures as potentials.
  
  MSC2010: 34L15, 34L40, 81Q10, 81Q12
  
  Key words: Schr\"odinger operators, eigenvalue problem, quasiperiodic potential
\end{abstract}

\section{Introduction}

In this paper we study Schr\"odinger operators of the form
\[H_\mu = -\Delta + \mu\]
in $L_2(\R)$, where the potential $\mu$ is locally a complex Radon measure.
We are going to study absence of eigenvalues of $H_\mu$ under the condition
that $\mu$ can be approximated by periodic measures in a suitable sense.
Roughly speaking, we require that there are arbitrarily large periods $p>0$
such that the three ``pieces'' $\1_{[-p,0]}\mu$, $\1_{[0,p]}\mu$ and
$\1_{[p,2p]}\mu$ look very similar.

The above so-called \emph{Gordon condition} first appeared in \cite{Gordon1976}.
Gordon treated the case of bounded potentials and measured the distance of the
``pieces'' in the $L_\infty$-norm. In \cite{DamanikStolz2000} this was generalised
to uniformly locally integrable potentials and the $L_{1,\loc,\unif}$-norm.
%This so-called \emph{Gordon condition} compares the given measure $\mu$ with 
Recently, a similar result was shown in \cite{Seifert2011} for real measures
as potentials, and the distance was measured in the total variation metric.
We will extend these results in two directions.

%The main generalisation appears in choosing a weaker \mbox{(semi-)norm} in the Gordon condition, thus allowing a wider range of applications.
%We are going to work with (weaker) a Wasserstein type metric instead, thus allowing a wider range of applications.
The most important new aspect is that we work with a weaker Wasserstein type metric
in the Gordon condition, thus allowing a wider range of applications.
For linear combinations of Dirac measures this means that the positions
of the Dirac deltas are allowed to vary, not only their coefficients as
in~\cite{Seifert2011}.

% \textbf{Vorschlag:}
% When measuring the di
% \textbf{Wiederholung?}:
% More precisely, the Gordon condition measures the distance of $\mu$ to periodic approximations of $\mu$.
% In \cite{Seifert2011} this distance was measured in the total variation metric;
% we are going to work with a Wasserstein type metric instead.

The second extension concerns sharp quantitative results.
%\textbf{$\M$ noch nicht definiert! Vorschlag:}
%Given $\mu\in\M$
%Given a potential $\mu$
%We determine a constant $E_\mu$ such that the operator $H_\mu$ has no eigenvalues $z$ with $|z| < E_\mu$,
%and we give an example that $-E_\mu$ can occur as an eigenvalue.
We show that the operator $H_\mu$ has no eigenvalues $z$ with $|z| < E_\mu$,
where $E_\mu$ is a constant given by an explicit expression in terms of $\mu$.
Moreover, we give an example that $-E_\mu$ can occur as an eigenvalue.
%HV24: Jetzt doch erstmal ganz weggelassen:
%As also mentioned in \cite{Seifert2011}, 
We note that we can also deal with complex measures, thus obtaining non-selfadjoint operators.

Our result can be applied to quasiperiodic measures where the ratio of the periods
satisfies a strong Liouville condition.
Such potentials were also considered in \cite{DamanikStolz2000,Seifert2011};
however, absence of eigenvalues was only shown under some additional hypotheses.
Working with our weaker metric we can prove the result without any further assumptions.

The question of the absence of eigenvalues appears in the study of one-dimensional quasicrystals. 
Here, it is crucial to obtain a good criterion when working with a weak metric to measure the distances of potentials (i.e., measures).
The results in \cite[Chapters 4 and~6]{Seifert2012} suggest that
the topology of vague convergence of measures is appropriate
in the study of continuum quasicrystals.
Our Wasserstein type metric is precisely of this quality, see Lemma~\ref{lem:vague}.

\medskip

We now describe in more detail the Schr\"odinger operator $H_\mu$ we are going to study.
We say that
\[
  \mu \from \set{B\subseteq\R;\;B \text{ is a bounded Borel set}} \to \C
\]
is a \emph{local measure} if $\1_K\mu := \mu(\cdot\cap K)$ is a
complex Radon measure for any compact set $K\subseteq \R$.
Then there exist a (unique) nonnegative Radon measure $\nu$ on $\R$ and
a measurable function $\sigma\from\R\to \C$ such that $\abs{\sigma} = 1$ $\nu$-a.e.\ and
$\1_K\mu = \1_K\sigma\nu$ for all compact sets $K\subseteq \R$. The \emph{total variation} of $\mu$ is defined by $\abs{\mu}:=\nu$.
Let $\Mloc$ be the space of all local measures on $\R$.

A local measure $\mu\in \Mloc$ is called \emph{uniformly locally bounded} if
\[
  \norm{\mu}_\lu := \sup_{a\in\R} \abs{\mu}((a,a+1]) < \infty.
\]
Let $\M$ denote the space of all uniformly locally bounded local measures.
The space $\M$ naturally extends $L_{1,\loc,\unif}(\R)$ to measures.

Given $\mu\in\M$, we now define the operator $H_\mu$ in $L_2(\R)$ as the
maximal operator associated with $-\Delta+\mu$ (in the distributional sense).
More precisely,
\begin{align*}
  D(H_\mu) & := \set{u\in L_2(\R)\cap C(\R);\; -u'' + u\mu\in L_2(\R)},\\
  H_\mu u & := -u'' + u\mu,
\end{align*}
i.e., for $u,f\in L_2(\R)$ one has $u\in D(H_\mu)$, $H_\mu u = f$ if and only if
\[
  \int_\R f\phi = -\!\int_\R u\phi'' + \int_\R u\phi\,d\mu \qquad \bigl(\phi\in C_\c^\infty(\R)\bigr).
\]
The operator $H_\mu$ can also be defined via the form method \cite{KlassertLenzStollmann2011}
or along the lines of Sturm-Liouville differential operators \cite{BenAmorRemling2005};
see Remark~\ref{rem:sl+form} for details.
In Theorem~\ref{thm:3ops} we will show that the three approaches yield the same operator
(cf.\ \cite[Corollary 1.3.7]{Seifert2012} for the self-adjoint case).
This result may be of independent interest.

\smallskip

\textit{Notation.}
For $\mu\in\M$ and $s,t\in\R$ we write
\[
  \int_s^t \dots \, d\mu = \begin{cases}
                         \int_{(s,t]} \dots \, d\mu & \text{ if } t\ge s,\\
                        -\int_{(t,s]} \dots \, d\mu & \text{ if } t<s.
                       \end{cases}
\]

The paper is organized as follows.
%The remaining part of this section introduces the class of potentials under consideration and the corresponding Schrödinger operators.
In Section~2 we introduce the seminorms on the space of measures we will work with.
Estimates on solutions of the eigenvalue equation are provided in Section~3.
These will then be used to prove the equivalence of various possibilities to define the Schr\"odinger operator.
%(\cite{BenAmorRemling2005, KlassertLenzStollmann2011}); cf.~\cite{Seifert2012} for the self-adjoint case.
%In Section~4 we focus on estimates of difference of solutions to different potentials. 
In Section~4 we estimate the difference of solutions to different potentials.
Our main theorem is stated and proved in Section~5. 
We also give an application to quasiperiodic potentials there. 
Finally, in Section~6 we construct an example to show that our results are sharp.

\section{Seminorms on measures}

In this section we define suitable seminorms on $\M$.
The Lipschitz continuous functions on $\R$ will be denoted by 
$W_{\!\infty}^1(\R)$.

\begin{definition}
For $\mu \in \M$ and a set $I \subseteq \R$ (which will usually be an interval) we define
\[
  \norm{\mu}_I := \sup\set{\Bigl|\int u\, d\mu\Bigr|;\;
  u\in W_{\!\infty}^1(\R),\: \spt u \subseteq I,\: \diam\spt u \leq 2,\: \norm{u'}_\infty \le 1 }.
\]
\end{definition}

Choosing supports of length at most $2$ yields nice values for ``typical'' measures.
\begin{example}
  (a) For the Lebesgue measure $\lambda$ on $\R$ we have $\norm{\lambda}_\R = 1$.

  (b) For the Dirac measure $\delta_t$ at $t\in\R$ we have $\norm{\delta_t}_\R = 1$.
Moreover, $\norm{\sum_{n\in\Z} \delta_n}_\R = 1$.

  (c) Let $\eps\in(0,1)$ and $\mu := \delta_0 - \delta_\eps + \delta_2 - \delta_{2+\eps}$.
Then $\|\mu\|_\R = \eps$, but there exists $u\in W_{\!\infty}^1(\R)$ with $\diam\spt u = 2+\eps$ and $\norm{u'}_\infty\le 1$ such that $\int\mkern-2mu u\, d\mu = 2\eps$. Thus, increasing the diameter of $\spt u$ in the definition of $\norm{\mu}_I$ only slightly can change the norm by a factor of $2$.
\end{example}

Next we explain what happens if one further increases $\diam\spt u$.

\begin{remark}\label{rem:norm_spt}
  Let $\mu\in\M$, $n\in\N$, and let $u\in W_{\!\infty}^1(\R)$ such that $\spt u \subseteq [-n,n]$,
  $\norm{u'}_\infty\leq 1$. Then
  \[\abs{\int u\, d\mu} \leq n^2\norm{\mu}_{[-n,n]}.\]
  Indeed, there exist $u_k\in W_{\!\infty}^1(\R)$ with
  $\diam\spt u_k \leq 2$\, ($k=1-n,\ldots,n-1$)
such that $u = \sum u_k$ and $\sum \norm{u_k'}_\infty \leq n^2$:
let
\[
  u_k(t) = \begin{cases}
           u(t) - u(k-1)\cdot(k-t) & \text{ if } k-1 \le t \le k, \\
           u(k)\cdot(k+1-t)        & \text{ if } k < t \le k+1
           \end{cases}
\]
for $k<0$,
\[
  u_0(t) = \begin{cases}
           u(t) - u(-1)\cdot(-t) & \text{ if } {-}1 \le t \le 0, \\
           u(t) - u(1)\cdot t    & \text{ if } 0 < t \le 1,
           \end{cases}
\]
and $u_k$ for $k>0$ defined similarly as for $k<0$.
Then $\norm{u_k'}_\infty \le n-|k|$ for all~$k$ since $|u(t)| \le n-|t|$ for all $|t|\le n$,
and $\sum_{k=1-n}^{n-1} (n-|k|) = n^2$.
\end{remark}

\begin{remark}\label{rem:norm-comp}
(a) It is easy to see that $\norm{\cdot}_\R$ defines a norm on $\M$ satisfying 
$\norm{\mu}_\R \le \norm{\mu}_\lu$:
for $u\in W_{\!\infty}^1(\R)$ with $\spt u \subseteq [a-1,a+1]$ and $\norm{u'}_\infty\leq 1$ (with some $a\in\R$) one has
\[
  |u(s)| \le \max\set{1-|s-a|,0} = \int_{a-1}^a \1_{(t,t+1]}(s)\,dt
  \qquad (s\in\R)
\]
and hence
\[
  \abs{\int u\, d\mu} \le \int_{a-1}^a \int \1_{(t,t+1]}\, d\abs{\mu} \,dt \le 
\norm{\mu}_\lu.
\]

(b) For $\mu\in\M$, the translates $\mu(\cdot+\eps)$ converge to $\mu$ in the
$\norm{\cdot}_\R$-norm as $\eps\to0$,
so $\norm{\cdot}_\R$ is weaker than $\norm{\cdot}_\lu$: for $\eps>0$ one has
\[
  \norm{\mu-\mu(\cdot+\eps)}_\R \le 3\eps\norm{\mu}_\lu.
\]
Indeed, for $\eps\ge1$ this is clear by part~(a), and for $0<\eps<1$ it follows from
the estimate
\[
  \abs{\int_\R u\,d\bigl( \mu-\mu(\cdot+\eps) \bigr)}
  \le \int_\R \abs{u-u(\cdot-\eps)}\,d\abs{\mu}
  \le \int_{\spt u + (0,\varepsilon)} \eps \norm{u'}_\infty \,d\abs{\mu}.
\]
%On the other hand, $\norm{\delta_{1/n}-\delta_0}\to 0$ whereas
%$\norm{\delta_{1/n}-\delta_0}_\lu = 2$ for all $n\in\N$.
%Hence, $\norm{\cdot}_\R$ is weaker than $\norm{\cdot}_\lu$.
\end{remark}

The next lemma shows in particular that the norm $\norm{\cdot}_\R$ induces the vague topology
on $\set*{\mu\in\M;\; \norm{\mu}_\lu\le R,\ \spt\mu \subseteq [-R,R]}$, for any $R>0$.

\begin{lemma}\label{lem:vague}
Let $(\mu_n)$ be a sequence in $\M$ and $\mu\in\M$. Then the following are equivalent:
  \begin{enumerate}[label=\textup{(\roman*)}]
    \item
      $\mu_n\to\mu$ vaguely,
    \item
      $(\1_I\mu_n)$ is $\norm{\cdot}_\lu$-bounded in $\M$ and $\norm{\mu_n-\mu}_I\to 0$,
      for all bounded intervals $I\subseteq\R$.
  \end{enumerate}
\end{lemma}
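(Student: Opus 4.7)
The plan is to exploit the fact that the family of admissible test functions
\[
  \mathcal{F}_I := \set*{u \in W_{\!\infty}^1(\R);\; \spt u \subseteq I,\ \diam \spt u \le 2,\ \norm{u'}_\infty \le 1}
\]
is equicontinuous (each $u$ being $1$-Lipschitz), uniformly bounded ($\norm{u}_\infty\le 1$ since $u$ vanishes at the endpoints of a support of diameter $\le 2$), and, for bounded $I$, contained in a common compact support. By Arzel\`a--Ascoli, $\mathcal{F}_I$ is relatively compact in $C_\c(\R)$ with the sup norm, and this is what drives both directions.

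For (ii) $\Rightarrow$ (i), I would fix $f \in C_\c(\R)$ with $\spt f \subseteq [-N,N]$ and approximate it by a Lipschitz function $f_\delta$ (e.g.\ via mollification) with $\spt f_\delta \subseteq [-N-1,N+1]$, $\norm{f-f_\delta}_\infty \to 0$, and $\norm{f_\delta'}_\infty \le C_\delta$. Decomposing
\[
  \int f\,d(\mu_n-\mu) = \int(f-f_\delta)\,d\mu_n + \int f_\delta\,d(\mu_n-\mu) + \int(f_\delta-f)\,d\mu,
\]
the outer terms are controlled by $\norm{f-f_\delta}_\infty$ times the uniform local bound on $\abs{\mu_n}+\abs{\mu}$, while rescaling $f_\delta$ by $C_\delta$ and applying Remark~\ref{rem:norm_spt} bounds the middle term by $(N+1)^2 C_\delta \norm{\mu_n-\mu}_{[-N-1,N+1]}$, which tends to $0$ as $n\to\infty$ for every fixed $\delta>0$. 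Taking $n\to\infty$ first and then $\delta\to 0$ yields vague convergence.

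For (i) $\Rightarrow$ (ii), the uniform local boundedness is obtained from Banach--Steinhaus applied to the pointwise convergent sequence of linear functionals $f\mapsto\int f\,d\mu_n$ on the Banach space $\set*{f \in C_\c(\R);\; \spt f \subseteq K}$ for each compact $K$, which gives $\sup_n\abs{\mu_n}(K) < \infty$; since $\norm{\1_I\mu_n}_\lu \le \abs{\mu_n}(\overline{I})$ for bounded $I$, the first assertion of (ii) follows. The norm convergence $\norm{\mu_n-\mu}_I \to 0$ is then the standard upgrade of pointwise to uniform convergence on a relatively compact family: given $\eps>0$, choose a finite sup-norm $\eps$-net $u_1,\ldots,u_k$ in $\mathcal{F}_I$, so each $u\in\mathcal{F}_I$ satisfies $\bigl|\int u\,d(\mu_n-\mu)\bigr| \le \bigl|\int u_j\,d(\mu_n-\mu)\bigr| + \eps(\abs{\mu_n}+\abs{\mu})(\overline I)$ for some index $j$. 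Hence $\sup_{u\in\mathcal{F}_I}\bigl|\int u\,d(\mu_n-\mu)\bigr|$ is dominated by $\max_j\bigl|\int u_j\,d(\mu_n-\mu)\bigr| + O(\eps)$, and the maximum tends to $0$ by vague convergence.

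The main obstacle is the direction (i) $\Rightarrow$ (ii), since pointwise convergence of the integrals gives a priori neither a uniform local bound on $\abs{\mu_n}$ nor uniform convergence over all of $\mathcal{F}_I$; both steps require genuine soft-analytic input (Banach--Steinhaus and Arzel\`a--Ascoli, respectively). The reverse implication is essentially a quantitative approximation argument combined with Remark~\ref{rem:norm_spt}.
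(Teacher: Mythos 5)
Your proof is correct and follows essentially the same route as the paper's: the paper phrases (i) as weak$^*$ null convergence of $\1_I\mu_n$ in $C_0(I)'$ and then invokes exactly your ingredients --- uniform boundedness for the $\norm{\cdot}_\lu$-bound, Arzel\`a--Ascoli compactness of the test family for uniform convergence, and density of Lipschitz functions together with Remark~\ref{rem:norm_spt} for the converse. Your version merely makes the $\eps$-net and mollification steps explicit.
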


\begin{proof}
Without loss of generality assume that $\mu=0$. For a bounded open interval $I\subseteq\R$
we can regard $\set{\1_I\mu;\; \mu\in\M}$ as the dual space of $C_0(I)$.
Note that then $\mu_n\to0$ vaguely if and only if
\begin{enumerate}
  \item[(i')] $\1_I\mu_n\to0$ in the weak$^*$ topology, for all bounded open intervals $I\subseteq\R$.
\end{enumerate}
We now fix a bounded open interval $I\subseteq\R$.

(i') $\Rightarrow$ (ii):
As a null sequence in the weak$^*$ topology, $(\1_I\mu_n)$ is bounded and hence $\norm{\cdot}_\lu$-bounded;
moreover, $(\1_I\mu_n)$ tends to $0$ uniformly on compact subsets of $C_0(I)$.
By the Arzel\`a-Ascoli theorem,
\[
  \set{u\in W_{\!\infty}^1(\R);\; \spt u \subseteq I,\ \diam\spt u \leq 2,\ \norm{u'}_\infty \le 1}
  \avoidbreak
\]
is compact in $C_0(I)$, so we conclude that $\norm{\mu_n}_I\to 0$.

(ii) $\Rightarrow$ (i'):
It follows from Remark~\ref{rem:norm_spt} that $\int\mkern-2mu u\,d\mu_n \to 0$ for all Lipschitz functions
$u$ in $C_0(I)$. Since the Lipschitz functions are dense in $C_0(I)$ and $(\1_I\mu_n)$ is bounded
in $C_0(I)'$, this implies $\int\mkern-2mu u\,d\mu_n \to 0$ for all $u\in C_0(I)$.
\end{proof}

An important property of the norm $\norm{\cdot}_\R$ is that unlike $\norm{\cdot}_\lu$
it allows approximation of  measures in $\M$ by absolutely continuous measures
with $C^\infty$-densities.

\begin{proposition}
\label{prop:approx}
	Let $\mu\in\M$. Then there exists a sequence $(\mu_n)$ in $\M$ such
	that $\mu_n$ has a $C^\infty$-density and $\norm{\mu_n}_\lu\leq \norm{\mu}_\lu$ for all $n\in\N$,
        $\norm{\mu - \mu_n}_\R\to 0$,  
	and\/ $\limsup_{n\to\infty}\abs{\mu_n}(I) \leq \abs{\mu}(I)$
	for all compact intervals $I\subseteq\R$.
\end{proposition}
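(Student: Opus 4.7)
The plan is to mollify $\mu$ by convolution with a standard smooth bump. I would fix a nonnegative $\rho\in C_\c^\infty(\R)$ with $\spt\rho\subseteq[-1,1]$ and $\int\rho=1$, set $\rho_n(t):=n\rho(nt)$, and let $\mu_n$ be the absolutely continuous measure whose density is $f_n(t):=\int_\R\rho_n(t-s)\,d\mu(s)$. Differentiation under the integral makes $f_n$ smooth, and the pointwise bound $\abs{f_n}\le\rho_n*\abs{\mu}$ will be the workhorse for all three assertions. (That $\mu_n\in\M$ follows a posteriori from the next step.)

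For the uniform local bound $\norm{\mu_n}_\lu\le\norm{\mu}_\lu$ I would start from
\[
  \abs{\mu_n}((a,a+1])\le\int_a^{a+1}(\rho_n*\abs{\mu})(t)\,dt,
\]
apply Fubini and the change of variables $\tau=t-s$ to rewrite the right-hand side as $\int_\R\rho_n(\tau)\abs{\mu}((a-\tau,a+1-\tau])\,d\tau$, and then bound each factor $\abs{\mu}((a-\tau,a+1-\tau])$ by $\norm{\mu}_\lu$.

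The convergence $\norm{\mu_n-\mu}_\R\to0$ I would handle by dualizing: for $u$ in the defining set of $\norm{\cdot}_\R$ (so $\spt u\subseteq I$ with $\diam I\le2$ and $\norm{u'}_\infty\le1$), Fubini gives
\[
  \int u\,d(\mu_n-\mu)=\int\bigl(u*\check\rho_n-u\bigr)\,d\mu,\qquad\check\rho_n(t):=\rho_n(-t).
\]
Because $u$ is $1$-Lipschitz we get $\norm{u*\check\rho_n-u}_\infty\le1/n$, and the difference is supported in $I+[-1/n,1/n]$, an interval of length $\le 4$ whose $\abs{\mu}$-mass is bounded by $4\norm{\mu}_\lu$. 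Hence $\abs{\int u\,d(\mu_n-\mu)}\le 4\norm{\mu}_\lu/n$ uniformly in $u$, giving $\norm{\mu_n-\mu}_\R\to0$.

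Finally, for the limsup statement the same pointwise bound on $\abs{f_n}$ combined with Fubini yields
\[
  \abs{\mu_n}(I)\le\int_\R\rho_n(\tau)\abs{\mu}(I-\tau)\,d\tau\le\abs{\mu}\bigl(I+[-1/n,1/n]\bigr)
\]
for any compact interval $I$, and outer regularity of the Radon measure $\abs{\mu}$ gives $\abs{\mu}(I+[-1/n,1/n])\downarrow\abs{\mu}(I)$ as $n\to\infty$. The only point requiring mild care, but not a real obstacle, is to carry out the Fubini and change-of-variable manipulations inside the framework of the nonnegative Radon measure $\abs{\mu}$ rather than the complex measure $\mu$; once this is observed, every step reduces to a routine computation.
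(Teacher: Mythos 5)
Your proposal is correct and follows essentially the same route as the paper: mollification of $\mu$ by a compactly supported approximate identity, the pointwise bound $\abs{f_n}\le\rho_n*\abs{\mu}$ with Fubini for the $\norm{\cdot}_\lu$ and $\limsup$ assertions, and the duality identity $\int u\,d(\mu_n-\mu)=\int(u*\check\rho_n-u)\,d\mu$ together with the Lipschitz estimate $\norm{u*\check\rho_n-u}_\infty\le 1/n$ for the convergence in $\norm{\cdot}_\R$. The only cosmetic difference is that the paper additionally assumes the mollifier symmetric, which you correctly avoid by working with $\check\rho_n$.
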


\begin{proof}
	%Let $(\psi_n)_{n\in\N}$ be a $C_\c^\infty$-approximation of the identity 
	%(i.e., $\psi_n\to \delta_0$ in the sense of distributions) with
	%$\spt \psi_n \subseteq(-\rfrac{1}{n},\rfrac{1}{n})$, $\psi_n(-\cdot)=\psi_n$ and $\psi_n\ge0$.
	For $n\in\N$ let $\psi_n \in C_\c^\infty(\R)$ satisfy
	$\spt \psi_n \subseteq(-\rfrac{1}{n},\rfrac{1}{n})$, $\int \psi_n = 1$, $\psi_n(-\cdot)=\psi_n$
	and $\psi_n\ge0$.
	In particular, $(\psi_n)$ is an approximation of the identity.
	For $n\in\N$ we define
	\[f_n(x):= \int \psi_n(x-y)\,d\mu(y) = \psi_n * \mu(x) \qquad(x\in\R);\]
	then $f_n\in C^\infty(\R)$. Set $\mu_n:=f_n\lambda$.
	
	Let $u\in W_{\!\infty}^1(\R)$, $\diam\spt u \leq 2$, $\norm{u'}_\infty\le 1$. Then 
	\[\abs{\int u\, d(\mu-\mu_n)} = \abs{\int \bigl(u(y) - \psi_n*u(y)\bigr)\, d\mu(y)}.\]
	%Since $u\in C_\c(\R)$ we have $\psi_n*u\to u$ uniformly. More precisely,
	Moreover,
	\[\diam\spt(\psi_n*u) \le 2+\frac{2}{n}, \qquad
          \norm{u - \psi_n*u}_\infty \le \frac{1}{n}\]
	since $\norm{u'}_\infty\le 1$ and $\spt\psi_n \subseteq (-\rfrac1n,\rfrac1n)$. 	
	Thus, $\norm{\mu - \mu_n}_\R\to 0$.

	For $a,b\in\R$, $a<b$ we compute, applying Fubini's theorem twice and substituting $x=t+y$,
	\begin{align*}
	  \abs{\mu_n}([a,b])
	  & = \int_a^b \abs{f_n(x)}\, dx \le \int_a^b \!\int \psi_n(x-y)\,d\abs{\mu}(y)\,dx \\
	  %& = \int\! \int_a^{b} \psi_n(x-y)\, dx\, d\abs{\mu}(y)
	  %  = \int\! \int_{a-y}^{b-y} \psi_n(t)\, dt \, d\abs{\mu}(y) \\
	  & = \int\! \int_{a-t}^{b-t}\,d\abs{\mu}(y)\, \psi_n(t)\,dt
            = \int \abs{\mu}\bigl((a-t,b-t]\bigr)\psi_n(t)\, dt.
	\end{align*}
	On the one hand, choosing $b=a+1$ yields
	\[\norm{\mu_n}_\lu = \sup_{a\in\R} \abs{\mu_n}([a,a+1]) \le \norm{\mu}_\lu.\]
	On the other hand we obtain
	\[
	  \limsup_{n\to\infty} \abs{\mu_n}([a,b])
	  \le \limsup_{n\to\infty} \int \abs{\mu}\bigl((a-\rfrac{1}{n},b+\rfrac{1}{n}]\bigr)\psi_n(t)\, dt
	    = \abs{\mu}([a,b]).
	  \qedhere
	\]
\end{proof}

\begin{lemma}
\label{lem:norm_psi_mu}
Let $\mu\in\M$ and $\psi\in W_{\!\infty}^1(\R)$, and let $I \subseteq \R$ be an interval. Then
\[
  \norm{\psi \mu}_I 
  \le (\norm{\psi}_\infty + \norm{\psi'}_\infty)\norm{\mu}_{I \mkern1mu\cap\mkern2mu \spt\psi}.
\]
\end{lemma}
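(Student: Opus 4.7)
The plan is to proceed directly from the definition of $\norm{\psi\mu}_I$: pick any admissible test function $u$ for this seminorm and transfer the estimate to $\mu$ by testing against $v := u\psi$, which will (after scaling) be an admissible test function for $\norm{\mu}_{I\cap\spt\psi}$. The identity to exploit is simply $\int u\,d(\psi\mu) = \int u\psi\,d\mu$.

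First I would fix $u\in W_{\!\infty}^1(\R)$ with $\spt u \subseteq I$, $\diam\spt u \le 2$, and $\norm{u'}_\infty \le 1$, and set $v := u\psi$. The support and diameter conditions are immediate: $\spt v \subseteq \spt u \cap \spt\psi \subseteq I\cap\spt\psi$ and $\diam\spt v \le \diam\spt u \le 2$. The only real estimate needed is on $\norm{v'}_\infty$. By the product rule (valid since both factors are Lipschitz and hence differentiable a.e.),
\[
  \norm{v'}_\infty \le \norm{u'\psi}_\infty + \norm{u\psi'}_\infty
  \le \norm{\psi}_\infty + \norm{u}_\infty\norm{\psi'}_\infty.
\]
The key small observation is that $\norm{u}_\infty \le 1$: since $u$ vanishes at the boundary of its support and $\diam\spt u \le 2$, for every $x\in\spt u$ the distance to the nearest boundary point is at most $1$, so $|u(x)| \le \norm{u'}_\infty \cdot 1 \le 1$. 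Therefore $\norm{v'}_\infty \le \norm{\psi}_\infty + \norm{\psi'}_\infty =: C$.

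If $C = 0$, the statement is trivial, so assume $C>0$. Then $w := v/C$ is an admissible test function in the definition of $\norm{\mu}_{I\cap\spt\psi}$, and
\[
  \Bigl|\int u\,d(\psi\mu)\Bigr|
  = \Bigl|\int v\,d\mu\Bigr|
  = C\Bigl|\int w\,d\mu\Bigr|
  \le C\,\norm{\mu}_{I\cap\spt\psi}.
\]
Taking the supremum over admissible $u$ yields the claimed bound. There is no substantial obstacle here; the only point that requires a small argument (rather than being immediate from the hypotheses) is the pointwise bound $\norm{u}_\infty\le 1$, which makes the $\norm{\psi'}_\infty$ term appear with coefficient $1$ in the final estimate.
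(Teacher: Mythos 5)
Your proof is correct and follows essentially the same route as the paper's: test $\norm{\psi\mu}_I$ with $u$, observe that $\psi u$ is (after normalisation) admissible for $\norm{\mu}_{I\cap\spt\psi}$, and bound $\norm{(\psi u)'}_\infty\le\norm{\psi}_\infty+\norm{\psi'}_\infty$. The paper states this last bound without comment; your explicit justification of $\norm{u}_\infty\le1$ via the support-diameter condition is exactly the detail being suppressed there.
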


\begin{proof}
	Let $u\in W_{\!\infty}^1(\R)$ satisfy $\spt u\subseteq I$, $\diam\spt u \le 2$ and $\norm{u'}_\infty \leq 1$.
	Then $\psi u\in W_{\!\infty}^1(\R)$, $\spt (\psi u)\subseteq I\cap\spt\psi$, $\diam\spt (\psi u) \le 2$ and
	\[\norm{(\psi u)'}_\infty \le \norm{\psi}_\infty + \norm{\psi'}_\infty.\]
	Hence, the assertion follows.
\end{proof}

For $\mu\in\M$ we define $\phi_\mu\from\R\to\C$ by
\[
  \phi_\mu(t) := \int_0^t d\mu
  % = \sgn t \cdot \mu(I_t)
  = \begin{cases}
     \mu\bigl((0,t]\bigr) & \text{ if } t\ge0, \\
    -\mu\bigl((t,0]\bigr) & \text{ if } t<0.
    \end{cases}
\]
The following result provides an alternative way for computing the $\norm{\cdot}_I$-seminorm of a given measure.

\begin{proposition}\label{prop:charakterisierung}
  Let $\mu\in\M$ and $a\in\R$.
  If $\mu$ is real, then
  \[\norm{\mu}_{[a-1,a+1]} = \min_{c\in\R} \int_{a-1}^{a+1} \abs{\phi_\mu(t)-c}\, dt.\]
  If $\mu$ is complex, then
  \[\norm{\mu}_{[a-1,a+1]} \le \min_{c\in\C} \int_{a-1}^{a+1} \abs{\phi_\mu(t)-c}\, dt \le 2\norm{\mu}_{[a-1,a+1]}.\]
  (Note that the minimum exists since the integral depends continuously on $c$
  and tends to $\infty$ as $|c|\to\infty$.)
\end{proposition}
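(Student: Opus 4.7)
The plan is to reduce the seminorm to a standard integration-by-parts identity and then, in the real case, exhibit an optimiser via a median construction. First I would verify that for every $u\in W_{\!\infty}^1(\R)$ with $\spt u\subseteq[a-1,a+1]$, which by continuity forces $u(a\pm1)=0$, and every $c\in\C$,
\[
  \int u\,d\mu \;=\; -\int_{a-1}^{a+1} u'(t)\bigl(\phi_\mu(t)-c\bigr)\,dt.
\]
This follows from writing $u(t)=-\int_t^{a+1}u'(s)\,ds$ and applying Fubini: the inner integral becomes $\mu\bigl((a-1,s)\bigr)$, which agrees with $\phi_\mu(s)-\phi_\mu(a-1)$ for Lebesgue-a.e.\ $s$ (the atomic part of $\mu$ contributes only on a Lebesgue-null set), and the additive constant $\phi_\mu(a-1)$ is absorbed into $c$ because $\int u'=0$. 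Alternatively, one may verify the formula for $\mu$ with a $C^\infty$-density by standard integration by parts and pass to the limit via Proposition~\ref{prop:approx}.

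From this identity the estimate $\abs{\int u\,d\mu}\le\int_{a-1}^{a+1}\abs{\phi_\mu(t)-c}\,dt$ is immediate from $\abs{u'}\le 1$, and taking supremum over $u$ followed by infimum over $c$ gives the first inequality of the proposition in both the real and complex cases. For the matching lower bound in the real case I would pick $c_0\in\R$ a median of $\phi_\mu|_{[a-1,a+1]}$, i.e., $\lambda(\phi_\mu>c_0)\le 1$ and $\lambda(\phi_\mu<c_0)\le 1$; a standard convexity argument shows that such $c_0$ minimises $c\mapsto\int_{a-1}^{a+1}\abs{\phi_\mu-c}\,dt$. I would then set $u'(t):=-\sgn(\phi_\mu(t)-c_0)$ on $\{\phi_\mu\ne c_0\}$, and on the complementary set select a measurable $[-1,1]$-valued function so that $\int_{a-1}^{a+1}u'\,dt=0$; the median inequalities are precisely what make this choice feasible. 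Defining $u(t):=\int_{a-1}^t u'(s)\,ds$ and extending by zero outside $[a-1,a+1]$ yields an admissible competitor with $\int u\,d\mu=\int_{a-1}^{a+1}\abs{\phi_\mu(t)-c_0}\,dt$, giving equality.

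For complex $\mu$ I would decompose $\mu=\mu_1+i\mu_2$ into real measures, noting $\phi_\mu=\phi_{\mu_1}+i\phi_{\mu_2}$ and $\norm{\mu_j}_I\le\norm{\mu}_I$ (the latter by restricting the defining supremum to real-valued $u$). Choosing $c=c_1+ic_2$ with $c_j$ a median for $\mu_j$, the triangle inequality $\abs{\phi_\mu-c}\le\abs{\phi_{\mu_1}-c_1}+\abs{\phi_{\mu_2}-c_2}$ yields
\[
  \min_{c\in\C}\int_{a-1}^{a+1}\abs{\phi_\mu-c}\,dt \;\le\; \norm{\mu_1}_I+\norm{\mu_2}_I \;\le\; 2\norm{\mu}_I.
\]
I expect the main hurdle to be the adjustment of $u'$ on the set $\{\phi_\mu=c_0\}$ in the real case: the median condition (rather than merely having a minimiser $c_0$) is exactly what is needed to reconcile the two constraints $\abs{u'}\le 1$ and $\int u'=0$, ensuring that $u$ extends by zero to a globally Lipschitz function with support in $[a-1,a+1]$.
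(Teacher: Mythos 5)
Your proof is correct and follows essentially the same route as the paper's: the integration-by-parts identity exploiting $\int u'=0$ to absorb the constant $c$, the median construction of an optimal real-valued competitor $u$ in the real case, and the decomposition $\mu=\mu_\r+i\mu_\i$ to get the factor $2$ in the complex case. You in fact supply more detail than the paper on the one point it leaves implicit, namely that the median inequalities make the adjustment of $u'$ on $\set{\phi_\mu=c_0}$ feasible so that $\int u'=0$ and $u$ vanishes at $a+1$.
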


\begin{proof}
  Let $u\in W_{\!\infty}^1(\R)$ with $\spt u \subseteq [a-1,a+1]$ and $\norm{u'}_\infty\leq 1$. Then
  \begin{align*}
    \int u\, d\mu & = \int_{a-1}^{a+1} \left( -\int_t^{a+1} u'(s)\, ds \right) d\mu(t) \\
    & = -\int_{a-1}^{a+1} \int_{a-1}^s \,d\mu(t)\, u'(s)\, ds 
      = -\int_{a-1}^{a+1} u'(s) \mu((a-1,s])\, ds.
  \end{align*}
  Since $\int_{a-1}^{a+1} u'(s)\, ds = 0$, it follows that
  \begin{align*}
    \int u\, d\mu & = -\int_{a-1}^{a+1} u'(s) (\phi_\mu(s)-c)\, ds
  \end{align*}
  for all $c\in\C$. Thus,
  \[\abs{\int u\, d\mu} \leq \int_{a-1}^{a+1} \abs{\phi_\mu(s)-c}\, ds\]
  and hence
  \[\norm{\mu}_{[a-1,a+1]} \leq \min_{c\in\C} \int_{a-1}^{a+1} \abs{\phi_\mu(s)-c}\, ds.\]

  In case $\mu$ is real, we choose $c\in\R$ to be a ``median'' of $\phi_\mu$,
  i.e., both $\phi_\mu \ge c$ and $\phi_\mu \le c$ hold on subsets of $[a-1,a+1]$
  with Lebesgue measure at least $1$.
  Then we can construct a real-valued function $u\in W_{\!\infty}^1(\R)$
  with $\spt u \subseteq [a-1,a+1]$ and $\norm{u'}_\infty\leq 1$ such that
  \[\int_{a-1}^{a+1} \abs{\phi_\mu(s)-c}\, ds = -\int_{a-1}^{a+1} u'(s) (\phi_\mu(s)-c)\, ds = \int u\, d\mu.\]
  Therefore,
  \[\min_{c\in\R} \int_{a-1}^{a+1} \abs{\phi_\mu(s)-c}\, ds \leq \norm{\mu}_{[a-1,a+1]}.\]
  %It follows in particular that the supremum in the definition of
  %$\norm{\mu}_{[a-1,a+1]}$ is attained for a real-valued function
  %$u\in W_{\!\infty}^1(\R)$.

  In case $\mu$ is complex, we have $\mu = \mu_\r + i\mu_\i$ with real measures $\mu_\r,\mu_\i\in \M$. Then by the above we find $c_\r\in\R$ and a real-valued function $u\in W_{\!\infty}^1(\R)$ such that
  \[
    \int_{a-1}^{a+1} \abs{\phi_{\mu_\r}(s)-c_\r}\, ds
    = \int u\,d\mu_r = \Re \int u\,d\mu \le \norm{\mu}_{[a-1,a+1]}.
  \]
  In the same way we obtain $\int_{a-1}^{a+1} \abs{\phi_{\mu_\i}(s)-c_\i}\, ds
  \le \norm{\mu}_{[a-1,a+1]}$ for some $c_\i\in\R$.
  %Moreover, there exists a real-valued function $u\in W_{\!\infty}^1(\R)$ such that
  %\[
  %  \norm{\mu_\r}_{[a-1,a+1]}
  %  = \int u\,d\mu_r = \Re \int u\,d\mu \le \norm{\mu}_{[a-1,a+1]},
  %\]
  %and in the same way we obtain $\norm{\mu_\i}_{[a-1,a+1]} \le \norm{\mu}_{[a-1,a+1]}$.
  Since $\phi_\mu = \phi_{\mu_\r} + i\phi_{\mu_\i}$, it follows that
  \[
    \int_{a-1}^{a+1} \abs*{\phi_{\mu}(s)-(c_\r+ic_\i)}\, ds \leq 2\norm{\mu}_{[a-1,a+1]}.
    \qedhere
  \]
\end{proof}

\begin{remark}
\label{rem:c_0} 
(a) According to Proposition~\ref{prop:charakterisierung}, for any $\mu\in\M$ and $a\in\R$
    there exists $c_a=c_{\mu,a}\in\C$ such that
    \begin{equation}\label{eq:normalternative}
      \int_{a-1}^{a+1} \abs{\phi_\mu(t)-c_a}\, dt \leq 2\norm{\mu}_{[a-1,a+1]}.
    \end{equation}
    Moreover, $c_{\mu,0}$ can always be chosen such that
      \[\abs{c_{\mu,0}} \le \norm{\mu}_\lu\]
    since $\abs{\phi_\mu(t)}\le \norm{\mu}_\lu$ for $t\in[-1,1]$.

(b) We point out that $\abs{c_{\mu,0}}$ can be large even if $\norm{\mu}_\R$ is small.
    As an example consider $\mu = \delta_{\eps}-\delta_{-\eps}$ for small $\eps>0$,
    where one has $\norm{\mu}_\R = 2\eps$ and $\phi_\mu = \1_{\R\setminus[-\eps,\eps)}$.
\end{remark}

\begin{lemma}\label{lem:est_1}
Let $\mu\in\M$ and $\alpha,\beta\in\Z$, $\alpha\le-1$, $\beta\ge1$. Then
\[
  \int_k^{k+1} \abs{\phi_\mu(t)-c_0}\, dt \le 
2\max\set{k+1,-k}\norm{\mu}_{[\alpha,\beta]}
\]
for all $k\in[\alpha,\beta-1]$, with $c_0$ as in~\eqref{eq:normalternative}.
\end{lemma}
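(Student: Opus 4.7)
The plan is to bound $\int_k^{k+1}\abs{\phi_\mu(t)-c_0}\,dt$ by a sum of controlled contributions of the shape $\int_{j-1}^{j+1}\abs{\phi_\mu-c_j}\,dt \le 2\norm{\mu}_{[j-1,j+1]}$, where $c_j := c_{\mu,j}$ is supplied for each integer $j$ by Remark~\ref{rem:c_0}(a) through \eqref{eq:normalternative}. By symmetry I treat the case $k\ge 0$ in detail; the case $k\le -1$ is analogous, using the chain $c_0, c_{-1},\ldots, c_{k+1}$ and yielding the factor $-k$.

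First I split via the triangle inequality: for $t\in[k,k+1]$,
\[
  \abs{\phi_\mu(t)-c_0} \le \abs{\phi_\mu(t)-c_k}+\abs{c_k-c_0},
\]
so integrating over $[k,k+1]$ reduces the task to bounds on $\int_k^{k+1}\abs{\phi_\mu-c_k}\,dt$ and on $\abs{c_k-c_0}$. The latter I telescope as $\abs{c_k-c_0}\le\sum_{j=0}^{k-1}\abs{c_{j+1}-c_j}$ and estimate each step by
\[
  \abs{c_{j+1}-c_j} = \int_j^{j+1}\abs{c_{j+1}-c_j}\,dt \le \int_j^{j+1}\abs{\phi_\mu-c_j}\,dt + \int_j^{j+1}\abs{\phi_\mu-c_{j+1}}\,dt.
\]

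The decisive step is then the re-grouping of the resulting half-interval integrals so that each constant $c_\ell$ is paired with its \emph{full} natural interval of length two. For $1\le\ell\le k-1$, the half $\int_{\ell-1}^{\ell}\abs{\phi_\mu-c_\ell}\,dt$ arising from the $(\ell-1)$st telescope step combines with $\int_{\ell}^{\ell+1}\abs{\phi_\mu-c_\ell}\,dt$ from the $\ell$th step into $\int_{\ell-1}^{\ell+1}\abs{\phi_\mu-c_\ell}\,dt \le 2\norm{\mu}_{[\ell-1,\ell+1]}$. At the right end, the leftover half $\int_{k-1}^{k}\abs{\phi_\mu-c_k}\,dt$ combines with $\int_k^{k+1}\abs{\phi_\mu-c_k}\,dt$ to give $\int_{k-1}^{k+1}\abs{\phi_\mu-c_k}\,dt\le 2\norm{\mu}_{[k-1,k+1]}$; at the left end, $\int_0^1\abs{\phi_\mu-c_0}\,dt\le\int_{-1}^{1}\abs{\phi_\mu-c_0}\,dt\le 2\norm{\mu}_{[-1,1]}$. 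The hypotheses $\alpha\le -1$ and $k\le\beta-1$ ensure that $[-1,1]$, $[\ell-1,\ell+1]$ (for $1\le\ell\le k-1$) and $[k-1,k+1]$ are all contained in $[\alpha,\beta]$, so the monotonicity of $\norm{\cdot}_I$ in $I$ bounds each of the $k+1$ blocks by $2\norm{\mu}_{[\alpha,\beta]}$, and their sum is $2(k+1)\norm{\mu}_{[\alpha,\beta]}$, as required.

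The main obstacle I expect is precisely this sharp bookkeeping. A cruder execution, in which each $\int_j^{j+1}\abs{\phi_\mu-c_j}\,dt$ is estimated already at the step level by $\int_{j-1}^{j+1}\abs{\phi_\mu-c_j}\,dt\le 2\norm{\mu}_{[j-1,j+1]}$, costs an extra factor of about $2$ per step and yields $4k\norm{\mu}_{[\alpha,\beta]}$; even after isolating the two boundary blocks one lands at best at $2(k+2)\norm{\mu}_{[\alpha,\beta]}$. The sharp constant $2(k+1)$ appears only once one notices that consecutive telescope steps deliver the two \emph{disjoint} halves of $[\ell-1,\ell+1]$, so that every intermediate $c_\ell$ is ``used once,'' and the only remaining blocks are the combined right-end block at $c_k$ and the left-end block at $c_0$.
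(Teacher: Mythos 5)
Your proof is correct and takes essentially the same route as the paper's: the paper likewise telescopes $c_k-c_0$ through the intermediate constants $c_j$ from \eqref{eq:normalternative} and regroups the half-interval integrals into the $k+1$ full blocks $\int_{j-1}^{j+1}\abs{\phi_\mu(t)-c_j}\,dt\le 2\norm{\mu}_{[\alpha,\beta]}$, which is exactly your sharp bookkeeping. The only cosmetic difference is that the paper disposes of $k\in\set{-1,0}$ directly from \eqref{eq:normalternative} and runs the telescope for $k\ge1$, whereas you fold $k=0$ into the general case.
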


\begin{proof}
For $k=-1$ and $k=0$ the assertion follows from~\eqref{eq:normalternative}.
%We now show the assertion for $k\ge1$; the proof of the case $k\le-2$ is analogous.
We now show the assertion for $k\ge1$; the proof of the case $k\le-2$ is analogous.

Using a telescope sum expansion, we estimate
\begingroup
\renewcommand\-{\mkern-1mu - \mkern-1mu}
\begin{align*}
\abs{c_k \- c_0}
 &\le \sum_{j=1}^k \int_{j-1}^j \bigl( \abs{c_j \- \phi_\mu(t)} + \abs{\phi_\mu(t) \- c_{j-1}} \bigr)\,dt \\
 &= \int_0^1 \abs{\phi_\mu(t) \- c_0}\,dt + \sum_{j=1}^{k-1} \int_{j-1}^{j+1} \abs{\phi_\mu(t) \- c_j}\,dt
      + \int_{k-1}^k \abs{\phi_\mu(t) \- c_k}\,dt.
\end{align*}
\endgroup
By~\eqref{eq:normalternative} we conclude that
\begin{align*}
\int_k^{k+1} \abs*{\phi_\mu(t)-c_0}\,dt
 &\le \int_k^{k+1} \bigl(\abs{\phi_\mu(t)-c_k}+\abs{c_k-c_0}\bigr)\,dt \\
 &\le \sum_{j=0}^k \int_{j-1}^{j+1} \abs{\phi_\mu(t)-c_j}\,dt
  \le (k+1) \cdot 2\norm{\mu}_{[\alpha,\beta]}.
  \qedhere
%\\[-1.5ex] & \qedhere
\end{align*}
%This implies the assertion since we can choose $c_0 = c_\mu$.
\end{proof}

\section{Estimates on solutions}
\label{sec:Estimates}

In this section we provide estimates on functions that satisfy
the eigenvalue equation for $H_\mu$ (but that are not necessarily in $L_2$).
These estimates enable us to show that the different versions for defining $-\Delta+\mu$ actually lead to the same operator.

\begin{definition}
For $\mu\in \M$ and $z\in \C$ we say that $u\in L_{1,\loc}(\R)$ is a (generalized) solution of the equation $H_\mu u = zu$ if
$u\in C(\R)$ and
\[-u''+\mu u = zu\]
in the sense of distributions.
\end{definition}

\begin{remark}\label{rem:ftc}
Let $\mu\in\M$ and $z\in\C$.

(a) Note that $u$ is a solution of $H_\mu u = zu$ if and only if $u$ is a solution of $H_{\mu-z\lambda} u = 0$.
Therefore, it is no loss of generality to assume $z=0$ in this and the following section.

(b) Let $u$ be a solution of the equation $H_\mu u = 0$.
Then $u''$ is a local measure, so $u'$ has locally bounded variation
(and therefore its one-sided limits exist everywhere), and
\[
  u'(t\rlim) - u'(s\rlim) = \int_s^t u(r)\, d\mu(r)
\]
for all $s,t\in\R$.
In particular, any solution of $H_\mu u = 0$ is locally Lipschitz continuous.
\end{remark}

\begin{lemma}\label{lem:expo}
Let $\mu\in\M$, and let $u$ be a solution of the equation $H_\mu u=0$. Then
\[
  \abs{u(t)} + \abs{u'(t\rlim)}
  \le \bigl( \abs{u(0)} + \abs{u'(0\rlim)} \bigr) e^{\omega(|t|+1)} \qquad (t\in\R),
\]
with $\omega = \norm{\mu}_\lu + 1$.
\end{lemma}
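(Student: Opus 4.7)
My plan is to reduce the claim to a Gronwall-type estimate for the scalar
\[
  v(s) := \abs{u(s)} + \abs{u'(s\rlim)} \qquad (s\in\R)
\]
against the positive Radon measure $\rho := \lambda + \abs{\mu}$. By Remark~\ref{rem:ftc}(b), $u$ is locally Lipschitz and $u'(s\rlim) - u'(0\rlim) = \int_0^s u\,d\mu$; combining this with the fundamental theorem of calculus for $u$ and the trivial bounds $\abs{u}, \abs{u'(\cdot\rlim)} \le v$, I first derive the integral inequality
\[
  v(s) \le v(0) + \int_{(0,s]} v\, d\rho \qquad (s \ge 0),
\]
together with its mirror version $v(s) \le v(0) + \int_{(s,0]} v\, d\rho$ for $s \le 0$.

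The heart of the proof will be to deduce from this the pointwise bound
\[
  v(t) \le v(0)\exp\bigl(\rho((0,t])\bigr) \qquad (t \ge 0)
\]
and its negative-time analogue. Given this, the lemma follows at once from
\[
  \rho((0,t]) = t + \abs{\mu}((0,t]) \le t + \lceil t\rceil\norm{\mu}_\lu \le (|t|+1)(1+\norm{\mu}_\lu) = \omega(|t|+1).
\]

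The main obstacle is that a naive Gronwall argument fails at atoms of $\rho$: at such a point $t_0$ the coefficient of $v(t_0)$ on the right-hand side can be $\ge 1$, so the inequality is not invertible. I intend to bypass this by exploiting the \emph{equation} (rather than the mere inequality) provided by Remark~\ref{rem:ftc}(b): at every atom $t_0$ of $\mu$,
\[
  u'(t_0\rlim) - u'(t_0\llim) = u(t_0)\,\mu(\{t_0\}),
\]
which, together with continuity of $u$, yields the explicit jump bound $v(t_0\rlim) \le (1 + \abs{\mu}(\{t_0\}))\, v(t_0\llim)$. On any sub-interval of $(0,t]$ free of atoms of $\mu$, $\rho$ reduces to its continuous part $\rho_c := \lambda + \abs{\mu}_c$ and the classical continuous-measure Gronwall inequality applies.

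Iterating these two steps over the (at most countably many) atoms $\set{t_j}$ of $\abs{\mu}$ in $(0,t]$ will give
\[
  v(t) \le v(0)\prod_{t_j\le t}\bigl(1 + \abs{\mu}(\{t_j\})\bigr)\exp\bigl(\rho_c((0,t])\bigr),
\]
from which the target bound follows via $1+x\le e^x$ and $\sum_j\abs{\mu}(\{t_j\}) + \rho_c((0,t]) = \rho((0,t])$. The only remaining subtlety will be organising the iteration when atoms are not discretely spaced; I would handle this by first processing the (finitely many) atoms in $(0,t]$ with weight exceeding a threshold $1/n$ and then letting $n\to\infty$, the passage to the limit being secured by $\sum_j\abs{\mu}(\{t_j\}) \le \abs{\mu}((0,t]) < \infty$.
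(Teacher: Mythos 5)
Your overall strategy is the same as the paper's: reduce to a Gronwall-type estimate for $\phi(s)=\abs{u(s)}+\abs{u'(s\rlim)}$ against the measure $\lambda+\abs{\mu}$, and then bound $(\lambda+\abs{\mu})((0,t])\le\omega(\abs{t}+1)$. The integral inequality, the jump identity $u'(t_0\rlim)-u'(t_0\llim)=u(t_0)\mu(\{t_0\})$, and the final arithmetic are all correct. The gap is in the step where you iterate over the atoms. Your plan is: apply the \emph{classical continuous-measure} Gronwall inequality on subintervals ``free of atoms of $\mu$'', and splice in the jump bound at the atoms; and when the atoms are not discretely spaced, first process only the finitely many atoms of mass exceeding $1/n$. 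But after thresholding, the subintervals between consecutive large atoms are \emph{not} free of atoms --- they still carry all the atoms of mass $\le 1/n$ --- so $\rho$ does not ``reduce to its continuous part'' there and the continuous-measure Gronwall inequality you invoke does not apply. For no fixed $n$ is the intermediate inequality established, so the limit $n\to\infty$ has nothing to pass to; your closing remark about $\sum_j\abs{\mu}(\{t_j\})<\infty$ controls convergence of the product, not the validity of the estimate being iterated.

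The hole is fixable, and the paper's proof shows the cleanest repair: it quotes a measure-theoretic Gronwall inequality (Ethier--Kurtz) valid for \emph{arbitrary} nonnegative Radon measures, whose hypothesis is $\phi(t)\le\eps+\int_{[0,t)}\phi\,d\nu$ with the integral over the half-open interval excluding the evaluation point $t$ --- precisely so that the atom at $t$ never multiplies $\phi(t)$ on the right-hand side, and no case distinction between atoms and continuous part is needed. Your inequality $v(s)\le v(0)+\int_{(0,s]}v\,d\rho$ is in the wrong (closed) form for $s>0$; the paper obtains the correct half-open form by running the estimate for the left-limit quantity $\phi_-(s)=\abs{u(s)}+\abs{u'(s\llim)}$, which satisfies $\phi_-(s)\le\phi(0)+\int_{(0,s)}\phi_-\,d\nu$, and then letting $s\downarrow t$. (For $t<0$ your mirror inequality already excludes the endpoint $t$, so only the positive-time direction needs this care.) Alternatively, you could complete your hands-on route by proving a Gronwall lemma that tolerates atoms of mass $<1$ --- solving for $v(s)$ at each small atom yields factors $(1-\rho(\{s\}))^{-1}\le e^{\rho(\{s\})+\rho(\{s\})^2}$, and the quadratic error vanishes as the threshold $1/n\to0$ --- but that is exactly the content you would otherwise import wholesale from the general measure Gronwall inequality.
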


\begin{proof}
Below we will use Gronwall's inequality in the following form: let 
$\phi\from[0,\infty)\to[0,\infty)$ be Borel measurable and locally bounded, 
$\eps>0$ and 
$0\leq \nu \in \mathcal{M}_\loc([0,\infty))$ (a nonnegative Radon measure on $[0,\infty)$), with
\[
  \phi(t) \le \eps + \int_{[0,t)} \phi(s)\,d\nu(s) \qquad (t\ge0).
  \avoidbreak
\]
Then $\phi(t) \le \eps e^{\nu([0,t))}$ for all $t\ge0$; see \cite[Thm.~5.1,\ p.\,498]{EthierKurtz2005}.

By Remark~\ref{rem:ftc}(b) we have
\[
  \vect(u(t),u'(t\rlim))
  = \vect(u(0),u'(0\rlim)) + \vect(\int_0^t u'(s\rlim)\,ds, \int_0^t u(s)\,d\mu \intstrut)
\]
for all $t\in\R$.
Thus, for $\phi(t) := \abs{u(t)} + \abs{u'(t\rlim)}$ and $\nu := \lambda+|\mu|$ we obtain
\[
  \phi(t) \le \phi(0) + \int_{(t,0]} \phi(s)\,d\nu(s) \qquad (t<0).
\]
By Gronwall's inequality we infer that
\[
  \phi(t) \le \phi(0) e^{\nu((t,0])} \qquad (t\le0).
\]
Thus, for $t\le0$ the assertion follows since $\norm{\nu}_\lu \le 1 + \norm{\mu}_\lu = \omega$ 
and hence $\nu((t,0]) \le \omega(\abs{t}+1)$.

Now let $t>0$. As above we can estimate
\[
  \phi_-(s) := \abs{u(s)} + \abs{u'(s\llim)} \le \phi(0) + \int_{(0,s)} \phi_-(r)\,d\nu(r),
\]
and Gronwall's inequality yields
\[
  \abs{u(s)} + \abs{u'(s\llim)}
  \le \bigl( \abs{u(0)} + \abs{u'(0\rlim)} \bigr) e^{\nu((0,s))} \qquad (s>0).
\]
For $s\downarrow t$ the assertion follows since $\nu((0,t]) \le \omega(\abs{t}+1)$.
\end{proof}

\begin{remark}
One could refine the above proof to obtain an estimate with $\omega = 2\norm{\mu}_\lu^{1/2}$,
at the expense of a multiplicative constant in the right hand side.
By a bootstrap-like argument we will even improve the bound to $\omega = \norm{\mu}_\lu^{1/2}$ in Proposition~\ref{prop:exp_growth}.
\end{remark}

In the following elementary lemma we provide estimates on the derivative
of a solution in terms of norms of the solution itself.

\begin{lemma}
\label{lem:u'_leq_u}
Let $\mu\in\M$, and let $u\in C(\R)$ be a solution of $H_\mu u = 0$.
Then for any interval $I = [a,b] \subseteq \R$ of length $1$ one has
\[
  \norm{u'|_I}_2
  \le \norm{u'|_I}_\infty
  \le M_\mu \norm{u|_I}_\infty
  \le \sqrt3\,M_{\mkern-0.5mu\mu}^{\mkern1mu 3/2} \norm{u|_I}_2,
\]
where $M_\mu := \norm{\mu}_\lu + 2$.
In particular, $u\in L_2(\R)$ implies $u'\in L_2(\R)$,
and $u\in C_0(\R)$ implies that $u'(t\rlim)\to 0$ as $t\to \pm\infty$.
\end{lemma}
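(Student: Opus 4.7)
The chain contains four quantities; the first inequality, $\norm{u'|_I}_2 \le \norm{u'|_I}_\infty$, is immediate since $\abs{I}=1$, and the two ``in particular'' statements will follow routinely from the chain by summing over a partition of $\R$ into unit intervals. The real content is the middle two inequalities.

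For the second inequality, the plan is to avoid a crude pointwise bound (which would produce the wrong constant) and instead \emph{average} the identity from Remark~\ref{rem:ftc}(b). Writing $v(t) := u'(t\rlim)$ and $I=[a,b]$, rewriting $v(t) = v(s) + \int_s^t u\,d\mu$ and integrating in $s$ over $I$ gives
\[
  v(t) = \int_a^b v(s)\,ds + \int_a^b\!\int_s^t u(r)\,d\mu(r)\,ds.
\]
The first term equals $u(b)-u(a)$ by the fundamental theorem of calculus applied to the locally Lipschitz function $u$ (since $v = u'$ a.e.\ on $I$, $v$ having only countably many jumps), and is therefore bounded by $2\norm{u|_I}_\infty$. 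For any $s,t\in I$ the inner integral in the second term is over a subset of $(a,b]$, which is exactly a half-open unit interval, so that term is bounded by $\norm{u|_I}_\infty\cdot\norm{\mu}_\lu$. Adding the contributions yields precisely $\norm{u'|_I}_\infty \le (2+\norm{\mu}_\lu)\norm{u|_I}_\infty = M_\mu \norm{u|_I}_\infty$. I expect the subtle point here to be insisting on averaging rather than taking a pointwise estimate, which would cost an extra factor.

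For the third inequality, the estimate just established means that $u$ is Lipschitz on $I$ with constant $L := M_\mu K$, where $K := \norm{u|_I}_\infty$ (if $K=0$ there is nothing to prove). Let $t_* \in I$ realize $\abs{u(t_*)} = K$; then $\abs{u(t)} \ge K - L\abs{t-t_*}$ on $I$. The key observation is that $M_\mu \ge 2$, so the ``radius of linear decay'' $K/L = 1/M_\mu$ is at most $1/2$, and since $t_*$ divides $I$ into two subintervals of total length $1$, at least one of them has length $\ge 1/2 \ge 1/M_\mu$. Integrating $(K - Ls)^2$ in $s$ from $0$ to $1/M_\mu$ yields $K^3/(3L) = K^2/(3M_\mu)$, hence $\norm{u|_I}_2^2 \ge K^2/(3M_\mu)$, which rearranges to $K \le \sqrt{3 M_\mu}\,\norm{u|_I}_2$ and in turn to $M_\mu \norm{u|_I}_\infty \le \sqrt{3}\,M_\mu^{3/2} \norm{u|_I}_2$.

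The two final consequences then follow routinely: summing the squared chain over unit intervals $I_n = [n,n+1]$ gives $\norm{u'}_2^2 \le 3 M_\mu^3 \norm{u}_2^2$, so $u\in L_2(\R)$ forces $u' \in L_2(\R)$; and for $u\in C_0(\R)$ the second inequality applied to $I=[t,t+1]$ yields $\abs{u'(t\rlim)} \le M_\mu\norm{u|_{[t,t+1]}}_\infty \to 0$ as $\abs{t}\to\infty$.
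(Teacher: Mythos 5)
Your proof is correct and follows essentially the same route as the paper: the paper also obtains the middle inequality by a mean-value argument (it picks $t_0\in I$ with $\abs{u'(t_0\rlim)}\le\abs{u(b)-u(a)}\le 2\norm{u|_I}_\infty$ and propagates via Remark~\ref{rem:ftc}(b), which is just your averaging step in pointwise form), and the third inequality is proved identically via the linear lower bound $\abs{u(s_0+t)}\ge(1-M_\mu\abs{t})\norm{u|_I}_\infty$ integrated over an interval of length $1/M_\mu$. Your explicit remark that $M_\mu\ge 2$ guarantees such an interval fits inside $I$ is a point the paper leaves implicit.
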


\begin{proof}
The first inequality is clear.
We have
\[
  \abs{\int_a^b u'(t+)\, dt}
  = \abs*{u(b)-u(a)}
  \le 2\norm{u|_I}_\infty,
\]
so there exists $t_0\in I$ with $\abs{u'(t_0\rlim)} \le 2\norm{u|_I}_\infty$.
For $t\in I$ it follows with Remark~\ref{rem:ftc}(b) that
\[
  \abs{u'(t\rlim)}
  \le \abs{u'(t_0\rlim)} + \abs{\int_{t_0}^t u(s)\, d\mu(s)}
  \le (2+\norm{\mu}_\lu) \norm{u|_I}_\infty,
\]
which proves the second inequality.

Let now $s_0 \in I$ such that $\abs{u(s_0)} = \norm{u|_I}_\infty$.
Then by the above we have $\abs{u(s_0+t)} \ge (1-M_\mu|t|) \norm{u|_I}_\infty$
for all $t\in\R$ with $s_0+t\in I$. Therefore,
\[
  \norm{u|_I}_2^2
  \ge \int_0^{1/M_\mu} (M_\mu t)^2 \norm{u|_I}_\infty^2\,dt
    = \frac{1}{3M_\mu} \norm{u|_I}_\infty^2,
\]
and the last inequality follows.
\end{proof}

\begin{remark}\label{rem:sl+form}
  As mentioned in the introduction, different methods for defining $-\Delta+\mu$ appear in the literature. 

  (a)
  In \cite{BenAmorRemling2005} the operator is defined along the lines of Sturm-Liouville differential operators. 
  For $u\in W_{1,\loc}^1(\R)$ one defines $A_\mu u\in L_{1,\loc}(\R)$ by
  \[(A_\mu u)(t) := u'(t) - \int_0^t u(s)\, d\mu(s)\]
  for a.a.~$t\in\R$.
  (Here and in the following, we choose the continuous representative of $u$ in the integral.)

  Then we define $H_{\mu,\SL}$ as a realization of $-\Delta+\mu$ by
  \begin{align*}
    D(H_{\mu,\SL}) & := \set{u\in L_2(\R)\cap W_{1,\loc}^1(\R);\;
                             A_\mu u \in W_{1,\loc}^1(\R),\ (A_\mu u)'\in L_2(\R)}, \\
    H_{\mu,\SL} u  & := -(A_\mu u)'.
  \end{align*}
% \textbf{Der folgende Absatz ist noch etwas merkw\"urdig.
% Bezieht sich "that paper" auf Ben-Amor/Remling?  Ich dachte,
% da sei $t$ immer $\ge0$.}
%   In that paper, a slightly different definition of $\int_0^t$ is used, namely the interval $[0,t]$ is used for $t\ge0$, and $(t,0)$ is used for $t<0$.
%   We will always work with half-open intervals.
  
%\textbf{Das folgende hat hier eigentlich nix zu suchen. Vielleicht k\"onnen 
%wir $I_t$ ganz vermeiden?  Richtig viel kommt es nur in Lemma 3.1 vor.}
%
%  For $t\in\R$ we write
%  \[I_t:=(\min\set{0,t},\max\{0,t\}] = \begin{cases}
%				      (0,t] & \text{ if } t\geq 0,\\
%				      (t,0] & \text{ if } t < 0.
%				      \end{cases}\]
%  Then
%  \[\int_0^t\ldots\,d\mu = \sgn t \cdot \int_{I_t}\ldots\,d\mu 
%\quad(t\in\R).\]
  
  (b)
  In \cite{KlassertLenzStollmann2011} the form method is used to define the operator.
  It is well-known that $\mu$ defines an infinitesimally form small perturbation
  of the classical Dirichlet form $\form_0$ given by
  \[
    D(\form_0) := W_2^1(\R), \qquad
    \form_0(u,v) := \int u' \overline{v}',
  \]
  i.e., for all $\delta>0$ there exists $C_\delta>0$ such that
  \[
    \abs{\int \abs{u}^2\,d\mu} \leq \delta \form_0(u,u) + C_\delta \norm{u}_2^2
    \qquad (u\in D(\form_0)).
  \]
  Thus,  
  \[
    D(\form_\mu) := W_2^1(\R), \qquad
    \form_\mu(u,v) := \int u' \overline{v}' + \int u\overline{v}\, d\mu
  \]
  defines a closed sectorial form $\form_\mu$ in $L_2(\R)$.
  (For the second integral note that $u\overline{v}\mu$ defines a complex Radon measure
  for any $u,v\in W_2^1(\R)$.)

  We denote by $H_{\mu,\F}$ the m-sectorial operator associated with $\form_\mu$, i.e.,
%  \[\form_\mu(u,v) = \sp{H_{\mu,\F} u}{v} \quad(u\in D(H_{\mu,\F}), v\in 
%	D(\form_\mu)),\]
  \begin{align*}
  	D(H_{\mu,\F}) &= \set*{ u\in D(\form_\mu);\; \exists\mkern1mu f_u \in L_2(\R)\;
	\forall\mkern1mu v\in D(\form_\mu) \colon \form_\mu(u,v) = \sp{f_u}{v} }, \\
	H_{\mu,\F}\mkern1mu u  &= f_u.
  \end{align*}
\end{remark}

\begin{theorem}\label{thm:3ops}
With the above notation one has $H_\mu = H_{\mu,\SL} = H_{\mu,\F}$.
\end{theorem}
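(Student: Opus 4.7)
The plan is to prove $H_{\mu,\F} = H_\mu$ and $H_{\mu,\SL} = H_\mu$ via a chain of inclusions; the only nontrivial step is $H_\mu \subseteq H_{\mu,\F}$, which relies on Lemma~\ref{lem:u'_leq_u} together with the fact that $H_{\mu,\F}$ has nonempty resolvent set.

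First I would establish the two ``easy'' inclusions $H_{\mu,\F} \subseteq H_\mu$ and $H_{\mu,\SL} \subseteq H_\mu$. For $u \in D(H_{\mu,\F}) \subseteq W_2^1(\R) \subseteq C_0(\R)$, testing the form identity $\form_\mu(u,v) = \sp{H_{\mu,\F}u}{v}$ against $v = \bar\phi$ for $\phi \in C_\c^\infty(\R)$ and using distributional integration by parts yields $-u'' + u\mu = H_{\mu,\F}u$ in the distributional sense, so $u \in D(H_\mu)$. For $u \in D(H_{\mu,\SL})$, a double application of Fubini's theorem shows that the distributional derivative of $t \mapsto \int_0^t u\,d\mu$ equals $u\mu$; hence $-(A_\mu u)' = -u'' + u\mu$, and $u \in D(H_\mu)$ with $H_\mu u = H_{\mu,\SL}u$.

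For the main inclusion $H_\mu \subseteq H_{\mu,\F}$, note that $\form_\mu$ is closed and sectorial, so $H_{\mu,\F}$ is m-sectorial and its resolvent set $\rho(H_{\mu,\F})$ is nonempty; fix $z \in \rho(H_{\mu,\F})$. Given $u \in D(H_\mu)$, set $v := (H_{\mu,\F} - z)^{-1}(H_\mu u - zu) \in D(H_{\mu,\F}) \subseteq D(H_\mu)$, so that $w := u - v$ is an $L_2$-solution of $H_\mu w = zw$, equivalently of $H_{\mu-z\lambda}w = 0$. Applying Lemma~\ref{lem:u'_leq_u} with $\mu$ replaced by $\mu - z\lambda \in \M$ on every interval of a partition of $\R$ into unit intervals and summing yields $w \in W_2^1(\R)$. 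A routine integration-by-parts argument, using density of $C_\c^\infty(\R)$ in $W_2^1(\R)$ and continuity of $\form_\mu$ on $W_2^1(\R) \times W_2^1(\R)$, then shows $\form_\mu(w,\psi) = z\sp{w}{\psi}$ for all $\psi \in W_2^1(\R)$, i.e., $w \in D(H_{\mu,\F})$ with $H_{\mu,\F}w = zw$. Since $z \in \rho(H_{\mu,\F})$, this forces $w = 0$, whence $u = v \in D(H_{\mu,\F})$.

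To insert $H_{\mu,\SL}$ into the chain I would show $H_{\mu,\F} \subseteq H_{\mu,\SL}$: for $u \in D(H_{\mu,\F}) = D(H_\mu)$, the distributional identity $(A_\mu u)' = u'' - u\mu = -H_\mu u \in L_2(\R)$ forces $A_\mu u$ to agree a.e.\ with a function in $W_{1,\loc}^1(\R)$ whose derivative lies in $L_2(\R)$; hence $u \in D(H_{\mu,\SL})$ with $H_{\mu,\SL}u = H_\mu u$, closing the chain $H_{\mu,\F} \subseteq H_{\mu,\SL} \subseteq H_\mu = H_{\mu,\F}$. The main obstacle is the third paragraph: upgrading a mere $L_2$-solution of the distributional eigenvalue equation to an element of the form domain $W_2^1(\R)$, where the interior estimate of Lemma~\ref{lem:u'_leq_u} does the crucial work.
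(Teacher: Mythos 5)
Your proposal is correct and follows essentially the same route as the paper: the easy inclusions by distributional integration by parts and Fubini, and the key inclusion $H_\mu\subseteq H_{\mu,\F}$ via the nonempty resolvent set of the m-sectorial operator, Lemma~\ref{lem:u'_leq_u} to get the $L_2$-solution of the eigenvalue equation into $W_2^1(\R)$, and density of $C_\c^\infty(\R)$ in the form domain. Your explicit subtraction of $v=(H_{\mu,\F}-z)^{-1}(H_\mu u-zu)$ is just an unpacked version of the paper's remark that a surjective operator has no proper injective extension.
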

  
\begin{proof}
  It is easy to see that $H_{\mu,\F}\subseteq H_\mu$; we show the converse operator inclusion.
  Since $H_{\mu,\F}$ is sectorial, there exists $z\in\C$ such that $z+H_{\mu,\F}$
  is injective and surjective. Let $u$ be in the kernel of $z+H_\mu$;
  then $u$ is a solution of $H_{\mu+z\lambda} u = 0$.
  Since $u\in L_2(\R)$, Lemma~\ref{lem:u'_leq_u} yields $u'\in L_2(\R)$.
  For $\phi\in C_\c^\infty(\R)$ we obtain
  \[
    \sp{H_{\mu}u}{\phi} = -\sp{u}{\phi''} + \int u\overline{\phi}\,d\mu
    = \sp{u'}{\phi'} + \int u \overline{\phi}\,d\mu = \form_\mu(u,\phi).
  \]
  Since $C_\c^\infty(\R)$ is dense in $W_2^1(\R)$, it follows that
$u\in D(H_{\mu,\F})$ and $H_{\mu,\F} u = H_\mu u$. Hence also 
$(z+H_{\mu,\F})u = 0$. Thus $u=0$, so we have shown that $z+H_\mu$ is injective.
  Since surjective mappings cannot have proper injective extensions, we obtain 
$H_{\mu,\F}= H_\mu$. 
  
  Applying Fubini's theorem as in \cite[Lemma 1.3]{Seifert2011}, we obtain 
$H_{\mu,\F}\subseteq H_{\mu,\SL}$. For $u\in D(H_{\mu,\SL})$ and $\phi\in 
C_\c^\infty(\R)$ we compute, with the help of Fubini's theorem,
  \[\sp{H_{\mu,\SL} u}{\phi} = \int u'\overline{\phi}' - \int\! \int_0^t u(s)\,d\mu(s)\,\overline{\phi}'(t)\, dt = -\sp{u}{\phi''} + \int u\overline{\phi}\,d\mu.\]
  Thus, $u\in D(H_\mu)$ and $H_\mu u = H_{\mu,\SL} u$. Hence, we obtain 
$H_{\mu,\SL}\subseteq H_\mu = H_{\mu,\F}$ and therefore the equality of all 
three operators.
\end{proof}

\section{Estimates on the difference of two solutions}

Throughout this section let $\mu_1,\mu_2\in\M$, and let $u_1,u_2$ be solutions to
\[
  H_{\mu_1} u_1 = 0 \qquad \text{and} \qquad H_{\mu_2} u_2 = 0,
\]
respectively. We set
\[
  v:=u_1-u_2 \qquad \text{and} \qquad \nu:=\mu_1-\mu_2,
\]
and we aim for an estimate on $v$ on an interval $I$ in terms of 
$\norm{\nu}_I$.

\smallskip

We first recall the definition and properties of the transfer matrices associated with a measure $\mu$.

\begin{remark}
Let $\mu\in\M$, and let $u$ be a solution of the equation $H_\mu u = 0$.
Fix $s\in\R$. By Lemma~\ref{lem:expo}, $u$ is uniquely determined by $(u(s),u'(s\rlim))$
(see also \cite[Theorem 2.3]{BenAmorRemling2005}).
Thus, for $t\in \R$ we can define the transfer matrix mapping the solution of $H_\mu u = 0$ at $s$ to the solution at $t$, i.e.,
\[
  T_\mu(t,s)\from \begin{pmatrix} u(s)\\u'(s\rlim)\end{pmatrix} \mapsto \begin{pmatrix} u(t)\\u'(t\rlim)\end{pmatrix}.
\]
Note that $T_\mu(t,t) = (\!\begin{smallmatrix}1&0\\0&1\end{smallmatrix}\!)$
and $T_\mu(t,r) = T_\mu(t,s)T_\mu(s,r)$ for all $r,s,t\in\R$.

Let $\uN(\cdot,s),\uD(\cdot,s)$ be the solutions of $H_\mu u = 0$ satisfying Neumann and Dirichlet boundary conditions at $s$, respectively, i.e.,
\begin{alignat*}{2}
                  \uN(s,s) & = 1,        &                 u_D(s,s) & = 0, \\
  \partial_1 \uN(s\rlim,s) & = 0, \qquad & \partial_1 \uD(s\rlim,s) & = 1.
\end{alignat*}
Then
\[
  T_\mu(t,s) = \begin{pmatrix} \uN(t,s) & \uD(t,s) \\ 
               \partial_1 \uN(t\rlim,s) & \partial_1 \uD(t\rlim,s) \bigstrut
               \end{pmatrix}
\]
and $\det T_\mu(t,s) = 1$ for all $t\in\R$, 
cf.~\cite[Proposition 2.5]{BenAmorRemling2005} and also \cite[Remark~2.7]{Seifert2011}.
It follows that
\begin{equation}\label{eq:T-inverse}
  T_\mu(s,t) = T_\mu(t,s)^{-1}
  = \begin{pmatrix}
       \partial_1 \uD(t\rlim,s) & -\uD(t,s) \\
      -\partial_1 \uN(t\rlim,s) &  \uN(t,s) \bigstrut
    \end{pmatrix},
\end{equation}
in particular, $\uD(s,t) = -\uD(t,s)$ for all $t\in\R$.
As a shorthand notation we will also write
\[
  \uD(t) := \uD(t,0), \qquad \uN(t) := \uN(t,0).
\]
\end{remark}

Next we provide a variation of constants type formula.

\begin{lemma}\label{lem:repr_u}
  Let $s,t\in\R$. Then  
  \[\begin{pmatrix} v(t) \\ v'(t\rlim)\end{pmatrix} = T_{\mu_1}(t,s)\begin{pmatrix} v(s) \\ v'(s\rlim)\end{pmatrix} + \int_s^t T_{\mu_1}(t,r) \begin{pmatrix} 0 \\ u_2(r)\end{pmatrix}\,d\nu(r).\]
\end{lemma}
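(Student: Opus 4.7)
The plan is to derive the formula by variation of constants, adapted to the measure-potential setting. A direct subtraction of $H_{\mu_1} u_1 = 0$ and $H_{\mu_2} u_2 = 0$ shows that $v = u_1 - u_2$ satisfies $-v'' + \mu_1 v = -u_2 \nu$ distributionally; by Remark~\ref{rem:ftc}(b) this is equivalent to the integral system
\begin{align*}
v(t) - v(s) &= \int_s^t v'(r\rlim)\,dr, \\
v'(t\rlim) - v'(s\rlim) &= \int_s^t v\,d\mu_1 + \int_s^t u_2\,d\nu.
\end{align*}
Let $w(t)$ denote the right-hand side of the claim. Since $T_{\mu_1}(s,s) = I$ and the source integral vanishes at $t = s$, one has $w(s) = (v(s), v'(s\rlim))^\top$. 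A Gronwall argument as in the proof of Lemma~\ref{lem:expo} shows uniqueness of solutions to the above integral system with given initial data at $s$, so it suffices to check that $w$ itself satisfies the system.

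For this, I would use the cocycle property $T_{\mu_1}(t,r) = T_{\mu_1}(t,0) T_{\mu_1}(0,r)$ together with~\eqref{eq:T-inverse} to obtain, writing $n(t) := \uN1(t,0)$ and $d(t) := \uD1(t,0)$, the explicit identity
\[
  \uD1(t,r) = d(t) n(r) - n(t) d(r).
\]
With $A(t) := \int_s^t n(r) u_2(r)\,d\nu(r)$ and $B(t) := \int_s^t d(r) u_2(r)\,d\nu(r)$, the first coordinate becomes
\[
  w_1(t) = \uN1(t,s) v(s) + \uD1(t,s) v'(s\rlim) + d(t) A(t) - n(t) B(t),
\]
and $w_2(t)$ is the analogous expression obtained by replacing $n, d$ with their right-derivatives $n'(t\rlim), d'(t\rlim)$ (and the two solution terms with the corresponding second row of $T_{\mu_1}(t,s)$). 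Integration by parts applied to the Lipschitz$\,\times\,$BV products $d \cdot A$ and $n \cdot B$ kills the $d\nu$-contribution via $d(r) n(r) - n(r) d(r) = 0$, leaving $w_1(t) - w_1(s) = \int_s^t w_2(r)\,dr$. Integration by parts for the two right-continuous BV products $d'(\cdot\rlim) \cdot A$ and $n'(\cdot\rlim) \cdot B$, combined with the Wronskian identity $d'(r\llim) n(r) - n'(r\llim) d(r) = 1$, collapses the $d\nu$-cross-terms into $\int_s^t u_2\,d\nu$; the remaining $d\mu_1$-contributions reassemble into $\int_s^t w_1\,d\mu_1$ since $n, d$ themselves solve $H_{\mu_1} = 0$.

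The main technical obstacle is the Leibniz rule for the two càdlàg BV functions $d'(\cdot\rlim)$ and $A$ (similarly $n'(\cdot\rlim)$ and $B$) at points that are atoms of both $\mu_1$ and $\nu$, where one must use the \emph{left} limits $d'(r\llim), n'(r\llim)$ in the $d\nu$-integrator. The Wronskian identity $d'(r\llim) n(r) - n'(r\llim) d(r) = 1$ at such atoms follows from the jumps $d'(r\rlim) - d'(r\llim) = d(r)\mu_1(\{r\})$ and $n'(r\rlim) - n'(r\llim) = n(r)\mu_1(\{r\})$: the resulting correction $\mu_1(\{r\})(d(r) n(r) - n(r) d(r))$ vanishes, so the left-limit Wronskian equals the right-limit one, $\det T_{\mu_1}(r,0) = 1$.
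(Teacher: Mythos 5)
Your argument is correct, but it is organised quite differently from the paper's. You verify that the right-hand side $w$ of the claimed identity satisfies the same integro-differential system as $\bigl(v,v'(\cdot\rlim)\bigr)$, namely $w_1(t)-w_1(s)=\int_s^t w_2(r)\,dr$ and $w_2(t)-w_2(s)=\int_s^t w_1\,d\mu_1+\int_s^t u_2\,d\nu$, and then invoke uniqueness (Gronwall, as in Lemma~\ref{lem:expo}); the computational core is the Stieltjes product rule for the c\`adl\`ag BV pairs $d'(\cdot\rlim)\cdot A$ and $n'(\cdot\rlim)\cdot B$, and you correctly identify the one delicate point, that the left-limit Wronskian $d'(r\llim)n(r)-n'(r\llim)d(r)$ still equals $1$ at common atoms of $\mu_1$ and $\nu$. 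The paper instead never verifies an ODE system: it reduces (after multiplying by $T_{\mu_1}(t,s)$) to the single identity
\[
  \svect(u_2(s), u_2'(s\rlim)) - T_{\mu_1}(t,s)^{-1}\svect(u_2(t), u_2'(t\rlim))
  = \int_s^t T_{\mu_1}(r,s)^{-1}\svect(0, u_2(r))\,d\nu(r),
\]
whose right-hand side has components $\mp\!\int u_{\mkern1mu\cdot\mkern1mu}u_2\,d\nu$; splitting $d\nu=d\mu_1-d\mu_2$ and applying the symmetric integration-by-parts formula $\int_s^t\tilde u\mkern1mu u\,d\mu=\tilde u(t)u'(t\rlim)-\tilde u(s)u'(s\rlim)-\int_s^t\tilde u'u'$ once with $(\tilde u,u,\mu)=(u_2,u_{\Dir/\Neu},\mu_1)$ and once with $(u_{\Dir/\Neu},u_2,\mu_2)$ makes the $\int\tilde u'u'$ terms cancel and leaves exactly the boundary terms. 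That route is shorter and entirely avoids the product rule for two discontinuous BV factors (and hence the left-limit bookkeeping at common atoms), whereas your route is more self-contained in that it only uses standard existence/uniqueness for the first-order system; both are valid, and your treatment of the atom issue is sound.
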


\begin{proof}
(i) We will use the following integration by parts type formula:
let $\mu\in\M$, $u$ a solution of $H_\mu u = 0$ and $\tilde{u}\in W_{1,\loc}^1(\R)$.
%, $a,b\in\R$, $a<b$.
Then
\[\int_s^t \tilde{u}(r) u(r)\, d\mu(r) = \tilde{u}(t)u'(t\rlim) - \tilde{u}(s)u'(s\rlim) - \int_s^t \tilde{u}'(r) u'(r)\, dr,\]
see \cite[Lemma~1.3.3]{Seifert2012} (note that there $\int_s^t = \int_{[s,t]}$ for $s<t$, so the boundary values slightly change).

(ii) Without loss of generality let $s=0$.
We are going to show that
\[
 \begin{pmatrix} u_2(0) \\ u_2'(0\rlim)\end{pmatrix} - T_{\mu_1}(t,0)^{-1}\begin{pmatrix} u_2(t) \\ u_2'(t\rlim)\end{pmatrix} = \int_0^t T_{\mu_1}(r,0)^{-1}\begin{pmatrix}  0\\ u_2(r)\end{pmatrix}\, d\nu(r).
\]
Multiplying this identity by $T_{\mu_1}(t,0)$ yields the assertion since
\[
  T_{\mu_1}(t,0) \begin{pmatrix} v(0) \\ v'(0\rlim)\end{pmatrix}
  = \begin{pmatrix} u_1(t) \\ u_1'(t\rlim)\end{pmatrix}
    - T_{\mu_1}(t,0) \begin{pmatrix} u_2(0) \\ u_2'(0\rlim)\end{pmatrix}
\]
and $T_{\mu_1}(t,0) T_{\mu_1}(r,0)^{-1} = T_{\mu_1}(t,r)$.

(iii)
Using~\eqref{eq:T-inverse} we compute
\[
  \int_0^t T_{\mu_1}(r,0)^{-1}\begin{pmatrix} 0 \\ u_2(r)\end{pmatrix}\, d\nu(r)
  = \begin{pmatrix}
    -\!\int_0^t \uD1(r) u_2(r)\,d\nu(r) \\
       \int_0^t \uN1(r) u_2(r)\,d\nu(r) \intstrut
    \end{pmatrix}.
\]
Integrating by parts as in (i) we obtain
\begin{align*}
  \begin{pmatrix}
  -\!\int_0^t \uD1 u_2\,d\nu \\
     \int_0^t \uN1 u_2\,d\nu \intstrut
  \end{pmatrix}
& = \begin{pmatrix}
    -\!\int_0^t u_2 \uD1\,d\mu_1 + \int_0^t \uD1 u_2\,d\mu_2 \\
       \int_0^t u_2 \uN1\,d\mu_1 - \int_0^t \uN1 u_2\,d\mu_2 \intstrut
  \end{pmatrix} \\[0.5ex]
& =
  \begin{pmatrix}
   -\bigl( u_2(t) \uD1'(t\rlim) - u_2(0) \bigr) + \bigl( \uD1(t) u_2'(t\rlim) - 0 \bigr) \\
    \bigl( u_2(t) \uN1'(t\rlim) - 0 \bigr) - \bigl( \uN1(t) u_2'(t\rlim) - u_2'(0\rlim) \bigr) \intstrut
  \end{pmatrix} \\[0.5ex]
% & =
%   \begin{pmatrix}
%     u_2(0) - \uD1'(t\rlim) u_2(t) + \uD1(t)u_2'(t\rlim) \\
%     u_2'(0\rlim)+ \uN1'(t\rlim) u_2(t) - \uN1(t) u_2'(t\rlim) \bigstrut
%   \end{pmatrix} \\[0.5ex]
& =
  \begin{pmatrix}
    u_2(0) \\
    u_2'(0\rlim) \bigstrut
  \end{pmatrix} - \begin{pmatrix}
                    \uD1'(t\rlim) & -\uD1(t) \\
                    -\uN1'(t\rlim) & \uN1(t) \bigstrut
                  \end{pmatrix} \begin{pmatrix} u_2(t) \\ u_2'(t\rlim) \bigstrut\end{pmatrix},
\end{align*}
which by~\eqref{eq:T-inverse} completes the proof.
\end{proof}

\begin{lemma}
\label{lem:u}
Let $c\in\R$. Suppose that $u_2(0) = u_1(0)$, $u_2'(0\rlim) = u_1'(0\rlim) + cu_1(0)$. Then
\[
  v(t) = \int_0^t \! \tfrac{d}{ds} \bigl(\uD1(t,s)u_2(s)\bigr) \cdot (c-\phi_\nu(s))\, ds
  \qquad (t\in\R).
\]
\end{lemma}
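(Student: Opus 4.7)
The plan is to apply the variation of constants formula from Lemma~\ref{lem:repr_u} at $s=0$, and then rewrite the resulting $d\nu$-integral as an integral against $ds$ via integration by parts, using that $\phi_\nu$ is a primitive of $\nu$.

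First, under the hypotheses one has $v(0) = 0$ and $v'(0\rlim) = -c\mkern1mu u_1(0) = -c\mkern1mu u_2(0)$. Inserting this into Lemma~\ref{lem:repr_u} with $s=0$ and reading off the first coordinate gives
\[
  v(t) = -c\mkern1mu u_2(0)\uD1(t,0) + \int_0^t \uD1(t,r)u_2(r)\, d\nu(r),
\]
since the $(1,2)$-entry of $T_{\mu_1}(t,0)$ is $\uD1(t,0)$.

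Next I would set $g(s) := \uD1(t,s)u_2(s)$ and integrate the $d\nu$-integral by parts against $\phi_\nu$. Here $u_2$ is locally Lipschitz by Remark~\ref{rem:ftc}(b), and $s\mapsto \uD1(t,s) = -\uD1(s,t)$ is also locally Lipschitz (as a solution of $H_{\mu_1}w = 0$ in its argument $s$, up to sign), so $g$ is locally Lipschitz, hence absolutely continuous. Since $d\nu = d\phi_\nu$ and $g$ is continuous of locally bounded variation, the Stieltjes integration by parts formula yields
\[
  \int_0^t g(r)\, d\nu(r) = g(t)\phi_\nu(t) - g(0)\phi_\nu(0) - \int_0^t \phi_\nu(s) g'(s)\, ds.
\]
Because $\uD1(t,t) = 0$ we have $g(t) = 0$, and $\phi_\nu(0) = 0$ by definition, so both boundary terms vanish.

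Finally, observing that $c\int_0^t g'(s)\,ds = c\bigl(g(t) - g(0)\bigr) = -c\mkern1mu \uD1(t,0)u_2(0)$, the first summand in the expression for $v(t)$ can be absorbed into $\int_0^t c\mkern1mu g'(s)\,ds$, and one arrives at
\[
  v(t) = \int_0^t g'(s)\bigl(c - \phi_\nu(s)\bigr)\,ds,
\]
which is the claim. The main (mild) obstacle is verifying the hypotheses for the Stieltjes integration by parts: concretely, that $s \mapsto \uD1(t,s)$ is locally Lipschitz and that one may exchange $d\nu$ with $d\phi_\nu$; both reduce to Remark~\ref{rem:ftc}(b) applied in the second argument via the antisymmetry relation $\uD1(t,s) = -\uD1(s,t)$ from~\eqref{eq:T-inverse}.
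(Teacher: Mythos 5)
Your proposal is correct and follows essentially the same route as the paper: both start from Lemma~\ref{lem:repr_u} at $s=0$ with $v(0)=0$, $v'(0\rlim)=-\ckern u_2(0)$, and then convert the $d\nu$-integral into a $ds$-integral against $\phi_\nu$, using $\uD1(t,t)=0$ and $\phi_\nu(0)=0$ to remove the boundary contributions and to absorb the term $-\ckern u_2(0)\uD1(t,0)$. The only cosmetic difference is that the paper writes $\uD1(t,r)u_2(r)=-\int_r^t \frac{d}{ds}(\uD1(t,s)u_2(s))\,ds$ and applies Fubini's theorem, whereas you invoke the Lebesgue--Stieltjes integration-by-parts formula directly; these are the same computation.
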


\begin{proof}
Let $t\in\R$. Note that $v(0)=0$, $v'(0\rlim) = -\ckern u_2(0)$.
By Lemma~\ref{lem:repr_u} we thus obtain
\[
  v(t) = - \uD1(t) \ckern u_2(0) + \int_0^t \uD1(t,r)u_2(r)\, d\nu(r).
\]
Since $\uD1(t,t) = 0$, we have
\[
  \uD1(t,r)u_2(r) = -\int_r^t \! \tfrac{d}{ds} \bigl(\uD1(t,s)u_2(s)\bigr)\, ds.
\]
By Fubini's theorem it follows from the above two identities that
\begin{align*}
v(t) 
 %& = \uD1(t) v'(0\rlim) - \int_0^t \! \int_r^t \! \tfrac{d}{ds} 
 %    \bigl(\uD1(t,s)u_2(s)\bigr)\, ds \, d\nu(r) \\
 & = -\uD1(t) \ckern u_2(0) - \int_0^t \! \int_0^s d\nu(r)\, \tfrac{d}{ds} \bigl(\uD1(t,s)u_2(s)\bigr)\, ds \\
 %& = -\uD1(t) \ckern u_2(0) - \int_0^t \! \tfrac{d}{ds} \bigl(\uD1(t,s)u_2(s)\bigr) \cdot \phi_\nu(s)\, ds \\
 & = \int_0^t (c-\phi_\nu(s)) \, \tfrac{d}{ds} \bigl(\uD1(t,s)u_2(s)\bigr)\, ds.
 \qedhere
\end{align*}
\end{proof}

Now we can prove the desired estimate on $v = u_1-u_2$ in terms of weak seminorms of $\nu = \mu_1-\mu_2$.

\begin{proposition}\label{prop:stability}
Suppose that $u_2(0) = u_1(0)$, $u_2'(0\rlim) = u_1'(0\rlim) + c_{\nu,0} 
u_1(0)$, where $c_{\nu,0}$ is as in~\eqref{eq:normalternative}.
%, and that $u_2$ is bounded. 
Let $\alpha,\beta\in\Z$, $\alpha\le-1$, $\beta\ge1$.
Let $c,\omega>0$ such that
\begin{equation}\label{eq:exp-bound}
  \abs{\partial_1 \uD1(t\rlim,s)} \le \ckern e^{\omega\abs{t-s}} \qquad (s,t\in\R).
\end{equation}
Then
\[
  \abs{v(t)} \le C\ckern e^{\omega\abs{t}} \norm{u_2|_{[\alpha,\beta]}}_\infty \norm{\nu}_{[\alpha,\beta]} \qquad(t\in[\alpha,\beta]),
\]
with a constant $C>0$ depending only on $\omega$ and $\norm{\mu_2}_\lu$.
% Then
% \[
%   \abs{v(t)} \le C e^{\omega_{\mu_1} \abs{t}} 
% \norm{u_2|_{[\alpha,\beta]}}_\infty \norm{\nu}_{[a,b]} \qquad 
% (t\in[\alpha,\beta]),
% \]
\end{proposition}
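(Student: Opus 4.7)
The plan is to apply Lemma~\ref{lem:u} with $c=c_{\nu,0}$, giving the integral representation
\[
  v(t) = \int_0^t \tfrac{d}{ds}\bigl(\uD1(t,s)u_2(s)\bigr)\cdot\bigl(c_{\nu,0}-\phi_\nu(s)\bigr)\, ds,
\]
and then to estimate the integrand using \eqref{eq:exp-bound}, combined with an exponential summation trick to control the remaining integral against $\norm{\nu}_{[\alpha,\beta]}$.

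\emph{Step 1 (compute the derivative a.e.).} From the matrix identity \eqref{eq:T-inverse} applied to $T_{\mu_1}(s,t)=T_{\mu_1}(t,s)^{-1}$ one reads off the antisymmetry $\uD1(t,s)=-\uD1(s,t)$ and the relation $\partial_1\uD1(s\rlim,t)=\uN1(t,s)$. Since both $\uD1(\cdot,t)$ and $u_2$ are locally Lipschitz (Remark~\ref{rem:ftc}), the product rule yields
\[
  \tfrac{d}{ds}\bigl(\uD1(t,s)u_2(s)\bigr) = -\uN1(t,s)u_2(s) + \uD1(t,s)u_2'(s\rlim)\quad\text{a.e.}
\]

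\emph{Step 2 (pointwise bounds).} Hypothesis \eqref{eq:exp-bound} immediately gives $\abs{\uN1(t,s)}=\abs{\partial_1\uD1(s\rlim,t)}\le c\mkern1mu e^{\omega|t-s|}$, and since $\uD1(s,s)=0$, integrating \eqref{eq:exp-bound} in the first argument yields $\abs{\uD1(t,s)}\le(c/\omega)\mkern1mu e^{\omega|t-s|}$. Trivially $\abs{u_2(s)}\le\norm{u_2|_{[\alpha,\beta]}}_\infty$ for $s\in[\alpha,\beta]$, and Lemma~\ref{lem:u'_leq_u} applied to a unit subinterval of $[\alpha,\beta]$ containing $s$ gives $\abs{u_2'(s\rlim)}\le M_{\mu_2}\norm{u_2|_{[\alpha,\beta]}}_\infty$. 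Combining these bounds for $t>0$ in $[\alpha,\beta]$ (the case $t<0$ is symmetric) yields
\[
  \abs{v(t)}\le C_1 c\norm{u_2|_{[\alpha,\beta]}}_\infty\int_0^t e^{\omega(t-s)}\abs{\phi_\nu(s)-c_{\nu,0}}\, ds,
\]
with $C_1$ depending only on $\omega$ and $\norm{\mu_2}_\lu$.

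\emph{Step 3 (summation trick).} Pulling out $e^{\omega t}$ and splitting $\int_0^t e^{-\omega s}\abs{\phi_\nu(s)-c_{\nu,0}}\,ds$ over unit intervals $[k,k+1]$ with $k\in\{0,\ldots,\lceil t\rceil-1\}$, I bound each piece via $e^{-\omega s}\le e^{-\omega k}$ and Lemma~\ref{lem:est_1} by $2(k+1)e^{-\omega k}\norm{\nu}_{[\alpha,\beta]}$. Summing gives
\[
  \int_0^t e^{-\omega s}\abs{\phi_\nu(s)-c_{\nu,0}}\,ds \le \Bigl(\sum_{k=0}^\infty 2(k+1)e^{-\omega k}\Bigr)\norm{\nu}_{[\alpha,\beta]} = C_2(\omega)\norm{\nu}_{[\alpha,\beta]}.
\]
Thus the exponential weight $e^{-\omega s}$ absorbs the linear factor $k+1$ coming from Lemma~\ref{lem:est_1}, producing a constant independent of $\alpha,\beta$. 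Reinserting $e^{\omega t}$ and combining with $C_1$ from Step~2 gives the claimed bound.

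The main obstacle is Step~1: identifying $\tfrac{d}{ds}\uD1(t,s)=-\uN1(t,s)$ almost everywhere rests on the transfer-matrix antisymmetry $\uD1(t,s)=-\uD1(s,t)$, the relation $\partial_1\uD1(s\rlim,t)=\uN1(t,s)$, and the locally Lipschitz regularity of $\uD1(\cdot,t)$ and $u_2$ needed to apply the product rule in the Lebesgue-integrable sense. Once this a.e.\ derivative formula is secured, Step~3 is the decisive ingredient, keeping the constant independent of the length of $[\alpha,\beta]$.
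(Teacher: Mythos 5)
Your proposal is correct and follows essentially the same route as the paper: Lemma~\ref{lem:u} with $c=c_{\nu,0}$, the antisymmetry $\uD1(t,s)=-\uD1(s,t)$ together with \eqref{eq:exp-bound} and Lemma~\ref{lem:u'_leq_u} to bound the $s$-derivative of $\uD1(t,s)u_2(s)$ by $C_0\mkern1mu c\mkern1mu e^{\omega|t-s|}\norm{u_2|_{[\alpha,\beta]}}_\infty$, and then the unit-interval decomposition with Lemma~\ref{lem:est_1} so that the factor $k+1$ is absorbed by $e^{-\omega k}$ into a convergent sum. The only cosmetic difference is that you route the bound on $\partial_1\uD1(s\rlim,t)$ through the identification with $\uN1(t,s)$, whereas the paper reads it directly off \eqref{eq:exp-bound} with the arguments swapped; both are immediate from \eqref{eq:T-inverse}.
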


Observe that~\eqref{eq:exp-bound} is always satisfied with
$\omega = \norm{\mu_1}_\lu + 1$ and $c=e^\omega$; this follows from Lemma~\ref{lem:expo}
since $\uD1(s,s) = 0$ and $\partial_1 \uD1(s\rlim,s) = 1$ for all $s\in\R$.

\begin{proof}
From $\uD1(s,s) = 0$ and the assumed estimate on $\partial_1\uD1$ we derive
$\abs{\uD1(t,s)} \le \rfrac{c}{\omega} e^{\omega\abs{t-s}}$ for all $s,t\in\R$.
Moreover, by Lemma~\ref{lem:u'_leq_u} we have
$\norm{u_2'|_{[\alpha,\beta]}}_\infty \le (\norm{\mu_2}_\lu+2) \norm{u_2|_{[\alpha,\beta]}}_\infty$.
Recalling from~\eqref{eq:T-inverse} that $\uD1(t,s) = -\uD1(s,t)$, we thus obtain
\begin{equation}\label{eq:der-est}
\begin{split}
\abs{\smash{\tfrac{d^+}{ds}} \bigl(\uD1(t,s)u_2(s)\bigr)}
 &= \abs*{ -\partial_1 \uD1(s\rlim,t) u_2(s) + \uD1(t,s)u_2'(s\rlim) } \\
 &\le C_0 \ckern e^{\omega\abs{t-s}} \norm{u_2|_{[\alpha,\beta]}}_\infty
\end{split}
\end{equation}
for all $s,t\in[\alpha,\beta]$, with $C_0 = 1 + \rfrac{1}{\omega} (\norm{\mu_2}_\lu+2)$.

Let $t\in[0,\beta]$. By Lemma~\ref{lem:u} we have
\[
  \abs{v(t)} \le \int_0^t \abs{\smash{\tfrac{d^+}{ds}}
\bigl(\uD1(t,s)u_2(s)\bigr)} \cdot \abs{\phi_\nu(s)-c_{\nu,0}}\, ds.
\]
By~\eqref{eq:der-est} and Lemma~\ref{lem:est_1} we conclude that
\begin{align*} 
\abs{v(t)}
& \le C_0\ckern \norm{u_2|_{[\alpha,\beta]}}_\infty \sum_{k=1}^{\beta} \int_{k-1}^{k} 
e^{\omega(t-s)} \abs{\phi_\nu(s)-c_{\nu,0}}\, ds \\
& \le C_0\ckern \norm{u_2|_{[\alpha,\beta]}}_\infty \sum_{k=1}^{\beta} e^{\omega(t+1-k)} \cdot 2k \norm{\nu}_{[\alpha,\beta]}
  \le C\ckern \norm{u_2|_{[\alpha,\beta]}}_\infty e^{\omega t} \norm{\nu}_{[\alpha,\beta]},
\end{align*}
with $C := C_0 \sum_{k=1}^{\infty} 2k e^{-\omega(k-1)}$.
The proof in the case $t\in[\alpha,0)$ is analogous.
\end{proof}

Using the previous result we can sharpen the estimate in Lemma~\ref{lem:expo}.

\begin{proposition}
\label{prop:exp_growth}
	Let $\mu\in\M$ and let $u$ be a solution of the equation $H_\mu u=0$.
	Then for $t\in\R$ one has
	\[
	  \bigl(\norm{\mu}_\lu\abs{u(t)}^2 + \abs{u'(t\rlim)}^2\bigr)^{1/2}
	  \le \bigl( \norm{\mu}_\lu\abs{u(0)}^2 + \abs{u'(0\rlim)}^2 \bigr)^{1/2}
	      e^{\norm{\mu}_\lu^{1/2}(|t|+1/2)}.
	\]
\end{proposition}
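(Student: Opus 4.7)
My plan is a direct Stieltjes energy estimate, handling the continuous and atomic parts of $\mu$ separately. Set $M := \norm{\mu}_\lu$, $E(t) := M\abs{u(t)}^2 + \abs{u'(t\rlim)}^2$, and decompose $\mu = \mu_c + \mu_d$ into its continuous and atomic parts. The goal reduces to the logarithmic bound
\[
  \log\sqrt{E(t)} - \log\sqrt{E(0)} \le \tfrac12\bigl(M^{1/2}\abs{t} + M^{-1/2}\abs{\mu}((0\wedge t,\,0\vee t])\bigr),
\]
since $\abs{\mu}((s,t]) \le (t-s+1)M$ then bounds the right-hand side by $M^{1/2}(\abs{t} + 1/2)$.

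\medskip

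\textbf{Continuous evolution.} On any atom-free subinterval $(s,t] \subseteq \R$, $u'$ is continuous (by Remark~\ref{rem:ftc}), and the Stieltjes product rule yields $d\abs{u'}^2 = 2\Re(\bar{u'}u\,d\mu_c)$ together with $d\abs{u}^2 = 2\Re(\bar{u}u')\,d\lambda$. Young's inequality $2\abs{u}\abs{u'}\le M^{1/2}\abs{u}^2 + M^{-1/2}\abs{u'}^2 = M^{-1/2}E$ then gives $\abs{dE} \le E\,(M^{1/2}\,d\lambda + M^{-1/2}\,d\abs{\mu_c})$, so a Stieltjes Gronwall argument produces
\[
  \bigl|\log E(t) - \log E(s\rlim)\bigr| \le M^{1/2}(t-s) + M^{-1/2}\abs{\mu_c}((s,t]).
\]

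\medskip

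\textbf{Atom jump.} At an atom $r$ of $\mu$ with mass $c := \mu(\set{r})$, $u$ stays continuous and $u'(r\rlim) = u'(r\llim) + c\,u(r)$. Setting $a := u(r)$, $b := u'(r\llim)$, expanding $E(r\rlim) - E(r\llim) = 2\Re(\bar{b}ac) + \abs{a}^2\abs{c}^2$, optimizing over the relative phases of $a,b,c$, and parametrizing $M\abs{a}^2 = E(r\llim)\sin^2\theta$, $\abs{b}^2 = E(r\llim)\cos^2\theta$ yields, with $y := \abs{c}/M^{1/2}$,
\[
  E(r\rlim)/E(r\llim) \le 1 + y^2/2 + y\sqrt{1 + y^2/4} = \bigl(\sqrt{1 + y^2/4} + y/2\bigr)^{\!2}.
\]
Since $\sinh(y/2) \ge y/2$ and hence $\cosh(y/2) \ge \sqrt{1 + y^2/4}$ for $y \ge 0$, the right-hand side is at most $(\cosh(y/2) + \sinh(y/2))^2 = e^y$, so $\log\sqrt{E(r\rlim)} - \log\sqrt{E(r\llim)} \le \abs{c}/(2M^{1/2})$.

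\medskip

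\textbf{Combining.} Summing the continuous inequality over the (at most countably many) atom-free components of $(0, t]$ and the jump inequality over the atoms of $\mu$ in $(0, t]$, the Lebesgue contributions add up to $M^{1/2}t/2$, while the $\abs{\mu_c}$-parts together with the atom masses combine to $M^{-1/2}\abs{\mu}((0, t])/2$. Hence for $t \ge 0$,
\[
  \log\sqrt{E(t)} - \log\sqrt{E(0)} \le \tfrac12\bigl(M^{1/2}t + M^{-1/2}\abs{\mu}((0, t])\bigr) \le M^{1/2}(t + 1/2),
\]
and $t \le 0$ is symmetric. The main care lies in the bookkeeping over possibly countably many (even accumulating) atoms, which is routine since $\sum_i\abs{c_i} \le \abs{\mu}((0, t]) < \infty$ and the continuous estimate is uniform over the complementary open set.
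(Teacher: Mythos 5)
Your route is genuinely different from the paper's: the paper first proves the estimate for measures with continuous densities (where $u\in C^2$ and the differential inequality for $\omega^2\abs{u}^2+\abs{u'}^2$ is elementary) and then transfers it to general $\mu\in\M$ via the smoothing of Proposition~\ref{prop:approx} combined with the stability estimate of Proposition~\ref{prop:stability}; you instead work directly with the Lebesgue--Stieltjes structure of $\mu$. Your two core estimates are correct: the continuous-part Gronwall bound reproduces the paper's step~(i), and the jump bound $E(r\rlim)/E(r\llim)\le e^{\abs{c}/M^{1/2}}$ is a sharp, self-contained substitute for the paper's limit passage. (Two degenerate cases should at least be mentioned: $\mu=0$, where you divide by $M^{1/2}=0$, and $u\equiv0$, where $\log E$ is undefined; both are trivial.)

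The gap is in the combining step. A measure in $\M$ can have atoms that are \emph{dense} in $(0,t]$ --- e.g.\ $\mu=\sum_n 2^{-n}\delta_{q_n}$ with $(q_n)$ an enumeration of the rationals --- and then the complement of the atom set contains no interval, so there are no ``atom-free components'' over which to sum the continuous inequality; as written, the telescoping produces nothing. What you need is a Gronwall inequality with jumps, argued at the level of measures rather than intervals: check that $E$ is locally bounded away from $0$ (if $E(t^*)=0$ then $u\equiv0$ by uniqueness), so that $G(t):=\log E(t)-M^{1/2}t-M^{-1/2}\abs{\mu}((0,t])$ is right-continuous and locally of bounded variation; decompose the measure $dG$ into continuous and atomic parts; note that the atomic part is $\le0$ at every atom by your jump estimate, and that the continuous part is $\le0$ by your bound $\abs{dE}\le E\,(M^{1/2}d\lambda+M^{-1/2}d\abs{\mu_c})$ together with the change-of-variables formula for the continuous part of $d(\log E)$; conclude that $G$ is non-increasing. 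This is standard Stieltjes calculus, but it is not the interval decomposition you describe, and the chain-rule step for $\log E$ (a BV function whose derivative measure has both continuous and atomic parts) is exactly the point that must be justified. With that replacement the proof closes.
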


\begin{proof}
    The case $\mu=0$ is trivial, since then $u$ is affine. So, let 
    $\mu\neq 0$.

    (i)
    First, assume that $\mu = \rho\lambda$ with a density $\rho\in C(\R)$.
%has a smooth density with respect to $\lambda$.
    Then $u\in C^2(\R)$, $u'' = \rho u$. Let $\omega>0$. 
    Then for $\phi(t):= \omega^2\abs{u(t)}^2 + \abs{u'(t)}^2$ we obtain
    \begin{align*}
    \abs{\phi'(t)}
      &=   \abs{ 2\Re\bigl((\omega^2+\rho)u(t) \overline{u'(t)\mkern-2mu}\mkern2mu \bigr) } \\
      &\le (\omega^2+\abs{\rho}) \bigl( \omega\abs{u(t)}^2 + \rfrac1\omega \abs{u'(t)}^2 \bigr)
       =   (\omega+\rfrac1\omega\abs{\rho}) \phi(t).
    \end{align*}
    It follows that $\phi(t) \le \phi(s) \exp\bigl( \omega(t-s) + \rfrac1\omega \int_s^t \abs{\rho(r)}\,dr \bigr)$
    and hence
    \[
      \omega^2\abs{u(t)}^2 + \abs{u'(t)}^2
      \le \bigl(\omega^2\abs{u(s)}^2 + \abs{u'(s)}^2\bigr)
          \exp\bigl(\omega(t-s) + \rfrac{1}{\omega}\abs{\mu}([s,t])\bigr)
    \]
    for all $s,t\in\R$, $s<t$.
    
    (ii)
    By Proposition~\ref{prop:approx} there exists a sequence $(\mu_n)_{n\in\N}$ in $\M$
    such that $\mu_n$ has a smooth density and $\norm{\mu_n}_\lu\le \norm{\mu}_\lu$
    for all $n\in\N$, $\norm{\mu_n-\mu}_\R\to 0$ and
    $\limsup_{n\to\infty}\abs{\mu_n}(I)\le \abs{\mu}(I)$ for all compact intervals $I\subseteq \R$.
    Lemma~\ref{lem:vague} implies $\mu_n\to\mu$ vaguely and, hence, $\1_{[a,b]}\mu_n \to \1_{[a,b]}\mu$ weakly for all $a,b\in\R$ such that $\mu(\set{a}) = \mu(\set{b}) = 0$.
    (Note that $\mu$ has at most countable many atoms.)

    (iii)
    For $n\in\N$ let $u_n$ be the solution of $H_{\mu_n} u_n = 0$ such that
    $u_n(0) = u(0)$ and $u_n'(0\rlim) = u'(0\rlim) + c_{\mu-\mu_n,0}u(0)$.
    Then $(u_n)$ is uniformly bounded on any compact interval
    by Lemma~\ref{lem:expo} and Remark~\ref{rem:c_0}(a), and
    Proposition~\ref{prop:stability} implies that $u_n\to u$ locally uniformly.
    Let $s,t\in\R$ such that $\mu(\set{s}) = \mu(\set{t}) = 0$. Then
    \[
      u_n'(t) - u_n'(s)
        = \int_s^t u_n(r)\,d\mu_n(r)
      \to \int_s^t u(r)\,d\mu(r)
        = u'(t\rlim) - u'(s\rlim)
    \]
    as $n\to\infty$ since $u_n\to u$ uniformly on $[s,t]$ and $\1_{[s,t]}\mu_n\to \1_{[s,t]}\mu$ weakly.
    %the right-hand side converges to $\int_s^t u\,d\mu = u(t)-u(s)$.
    It follows from Lemma~\ref{lem:u'_leq_u} that $(u_n')$ is uniformly bounded on $[0,1]$.
    Thus, integrating $s$ from $0$ to $1$ we conclude by
    Lebesgue's dominated convergence theorem that
    \[u_n'(t) - \bigl(u_n(1) - u_n(0)\bigr)
%= \int_0^1 \bigl(u_n'(t\rlim) - u_n'(s\rlim)\bigr)\,ds \to \int_0^1 \bigl(u'(t\rlim) - u'(s\rlim)\bigr)\,ds =
      \to u'(t\rlim) - \bigl(u(1) - u(0)\bigr)\]
    and hence $u_n'(t) \to u'(t\rlim)$.
    
    (iv)
    Let $t>s>0$ such that $\mu(\set{s}) = \mu(\set{t}) = 0$. By~(i) we obtain
    \[
      \omega^2\abs{u_n(t)}^2 + \abs{u_n'(t)}^2
      \le \bigl(\omega^2\abs{u_n(s)}^2 + \abs{u_n'(s)}^2\bigr)
          \exp\bigl(\omega(t-s) + \rfrac{1}{\omega}\abs{\mu_n}([s,t])\bigr)
    \]
    for all $n\in\N$. For $n\to\infty$ we infer, using part~(iii) and the estimate
    $\limsup\abs{\mu_n}([s,t]) \le \abs{\mu}([s,t])$, that
    \[
      \omega^2\abs{u(t)}^2 + \abs{u'(t\rlim)}^2
      \le \bigl(\omega^2\abs{u(s)}^2 + \abs{u'(s\rlim)}^2\bigr)
          \exp\bigl(\omega(t-s) + \rfrac{1}{\omega}\abs{\mu}([s,t])\bigr).
    \]
    Finally, let $t>0$.
    Then there exist sequences $(s_n)$ in $[0,t)$ and $(t_n)$ in $[t,\infty)$
    such that $s_n\to 0$, $t_n\to t$ and $\mu(\set{s_n}) = \mu(\set{t_n}) = 0$
    for all $n\in\N$, and from the above we deduce that
    \[
      \omega^2\abs{u(t)}^2 + \abs{u'(t\rlim)}^2
      \le \bigl(\omega^2\abs{u(0)}^2 + \abs{u'(0\rlim)}^2\bigr)
          \exp\bigl(\omega t + \rfrac{1}{\omega}\abs{\mu}((0,t])\bigr).
    \]
    Choosing $\omega=\norm{\mu}_\lu^{1/2}$ yields the assertion for $t\ge0$
    since then $\abs{\mu}((0,t]) \le \omega^2(t+1)$. The case $t<0$ is proved analogously.
    %(v)
    %Let $t>0$. Then there exists $(t_n)$ in $[t,\infty)$ such that $t_n\to t$ and $\mu(\set{t_n}) = 0$ for all $n\in\N$. Then (iv) implies
    %\[\bigl(c^2\abs{u(t)}^2 + \abs{u'(t\rlim)}^2\bigr) \le \bigl(c^2\abs{u(0)}^2 + \abs{u'(0\rlim)}^2\bigr) e^{c\abs{t} + \frac{1}{c}\abs{\mu}((0,t])}.\]
    %\[\bigl(\norm{\mu}_\loc\abs{u(t)}^2 + \abs{u'(t\rlim)}^2\bigr)^{1/2} \le \bigl(\norm{\mu}_\loc\abs{u(0)}^2 + \abs{u'(0\rlim)}^2\bigr)^{1/2} e^{\norm{\mu}_\loc^{1/2}(\abs{t} + 1)}.\]
\end{proof}

\section{Gordon's Theorem}

We are now in the position to formulate the weak Gordon condition and to prove our main theorem. We will then apply the result to quasiperiodic measures.

\begin{definition}
   Let $\mu\in\M$ and 
   $C> 0$. We say that $\mu$ is a \emph{weak Gordon measure} with weight $C$
  if there exists a sequence $(p_m)$ in $(0,\infty)$ with $p_m\to \infty$ such 
that
  \[\lim_{m\to\infty} e^{Cp_m} \norm{\mu - \mu(\cdot+p_m)}_{[-p_m,p_m]} = 0.\]
\end{definition}

The following lemma relates our notion of weak Gordon measures with previous versions for Gordon potentials/measures (see \cite{Gordon1976,DamanikStolz2000,Seifert2011}).

\begin{lemma}
\label{lem:equiv_Gordon}
  Let $\mu\in\M$, $C> 0$. Then the following are equivalent:
  \begin{enumerate}[label=\textup{(\roman*)}]
    \item
      $\mu$ is a weak Gordon measure with weight $C$.
    \item
      There exist sequences $(p_m)$ in $(0,\infty)$ and $(\mu_m)$ in $\M$ such that
      $\mu_m$ is periodic with period $p_m$ ($m\in\N$), $p_m\to\infty$ and
      \[
        e^{Cp_m} \norm{\mu-\mu_m}_{[-p_m,2p_m]} \to 0 \qquad (m\to\infty).
      \]
  \end{enumerate}
  Moreover, the measures $\mu_m$ in~\textup{(ii)} can be chosen such that
  \[
    \1_{[a_m,p_m-a_m]} \mu_m = \1_{[a_m,p_m-a_m]} \mu, \quad
    \norm{\mu_m}_\lu \le \bigl(1+\tfrac{1}{2a_m}\bigr)\norm{\mu}_\lu
  \]
  for all $m\in\N$, with $0 < a_m \le \frac{p_m}{2}$ and $\inf_{m\in\N} a_m>0$.
\end{lemma}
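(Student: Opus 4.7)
I prove the two directions separately, with (i)$\Rightarrow$(ii) (together with the additional properties) requiring the explicit construction.

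For (ii)$\Rightarrow$(i), periodicity $\mu_m(\cdot + p_m) = \mu_m$ allows the rewrite $\mu - \mu(\cdot + p_m) = (\mu - \mu_m) - (\mu - \mu_m)(\cdot + p_m)$. Translation invariance of the seminorm gives $\norm{(\mu - \mu_m)(\cdot + p_m)}_{[-p_m, p_m]} = \norm{\mu - \mu_m}_{[0, 2p_m]}$, and monotonicity in the support interval yields $\norm{\mu - \mu(\cdot + p_m)}_{[-p_m, p_m]} \le 2\norm{\mu - \mu_m}_{[-p_m, 2p_m]}$. Multiplying by $e^{Cp_m}$ gives (i) with the same weight.

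For (i)$\Rightarrow$(ii), I construct $\mu_m$ as a smooth periodization. Choose $a = a_m$ with $0 < a \le p_m/2$ and $\inf_m a_m > 0$ (concretely, $a_m := \min(1, p_m/2)$). Let $\chi_a \from \R \to [0, 1]$ be the piecewise linear function with $\chi_a = 0$ on $(-\infty, -a]$, $\chi_a(t) = (t + a)/(2a)$ on $[-a, a]$, and $\chi_a = 1$ on $[a, \infty)$. Put $\alpha_0(t) := \chi_a(t) - \chi_a(t - p_m)$; this is a bump equal to $1$ on $[a, p_m - a]$, supported in $[-a, p_m + a]$, with linear slopes $\pm 1/(2a)$ on the two transition strips. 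Set $\alpha_k(t) := \alpha_0(t - kp_m)$; the telescoping identity $\sum_k \alpha_k \equiv 1$ holds. Define
\[
    \mu_m := \sum_{k \in \Z} \alpha_k \cdot \mu(\cdot - kp_m),
\]
which is manifestly $p_m$-periodic. The identity $\1_{[a, p_m - a]}\mu_m = \1_{[a, p_m - a]}\mu$ is immediate, since only $\alpha_0 = 1$ contributes there. For the $\lu$-bound, on a unit interval $E = (b, b+1]$ at most two of the shifts $E - kp_m$ meet $\spt \alpha_0 \subseteq [-a, p_m + a]$; when two do, the identity $\alpha_0(t) + \alpha_0(t - p_m) = 1$ on $[p_m - a - 1, p_m + a + 1]$ combined with the linearity of $\chi_a$ forces $\sup_E \alpha_0 + \sup_{E - p_m}\alpha_0 \le 1 + 1/(2a)$, giving $\norm{\mu_m}_\lu \le (1 + \frac{1}{2a_m})\norm{\mu}_\lu$.

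For the crucial closeness estimate, set $\nu := \mu - \mu(\cdot + p_m)$. The telescoping identity $\mu(\cdot - kp_m) - \mu = \sum_{j=1}^{k}\nu(\cdot - jp_m)$ for $k \ge 1$ (with the analogous formula for $k \le -1$), combined with $\sum_k \alpha_k = 1$, yields
\[
    \mu - \mu_m = -\sum_{j \in \Z} \beta_j \cdot \nu(\cdot - jp_m),
\]
where $\beta_j(t) = \chi_a(t - jp_m)$ for $j \ge 1$ and $\beta_j(t) = \chi_a(t - jp_m) - 1$ for $j \le 0$. Testing against $u$ with $\spt u \subseteq [-p_m, 2p_m]$, $\diam \spt u \le 2$, and $\norm{u'}_\infty \le 1$, only the indices $j \in \{-1, 0, 1, 2\}$ produce nonzero contributions, and an explicit support calculation shows that each $\tilde u_j(t) := u(t + jp_m)\beta_j(t + jp_m)$ has support inside $[-p_m, p_m]$, diameter at most $2$, and Lipschitz constant at most $1 + 1/(2a)$. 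Hence $\abs{\int u \, d(\mu - \mu_m)} \le 4(1 + \frac{1}{2a_m})\norm{\nu}_{[-p_m, p_m]}$, so $\norm{\mu - \mu_m}_{[-p_m, 2p_m]}$ is a uniform multiple of $\norm{\mu - \mu(\cdot + p_m)}_{[-p_m, p_m]}$ (using $\inf_m a_m > 0$), and the weight $e^{Cp_m}$ passes through. The main obstacle is this telescoped decomposition and the combinatorial check that each of the four functions $\tilde u_j$ has support inside $[-p_m, p_m]$; that support containment is precisely what allows the single length-$p_m$ Gordon seminorm to dominate the doubled-length interval $[-p_m, 2p_m]$.
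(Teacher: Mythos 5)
Your proof is correct and follows essentially the same route as the paper: the direction (ii)$\Rightarrow$(i) is identical, and for (i)$\Rightarrow$(ii) you build the same trapezoidal periodization $\mu_m=\sum_k(\psi_m\mu)(\cdot+kp_m)$ and reduce $\norm{\mu-\mu_m}_{[-p_m,2p_m]}$ to $\norm{\mu-\mu(\cdot+p_m)}_{[-p_m,p_m]}$; the paper estimates the four nonzero terms $\psi_m(\cdot+kp_m)\bigl(\mu-\mu(\cdot+kp_m)\bigr)$, $|k|\le2$, via Lemma~\ref{lem:norm_psi_mu} plus a triangle inequality for $|k|=2$, whereas you telescope the shifted differences first and test directly -- the same bookkeeping in a different order (constant $4$ instead of $6$). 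One small remark: the lemma is later applied with $a_m\to\infty$ (proof of Theorem~\ref{thm:Gordon}), so $(a_m)$ should be kept arbitrary subject to $0<a_m\le\frac{p_m}{2}$ and $\inf_m a_m>0$ rather than fixed as $\min(1,\frac{p_m}{2})$; your construction works verbatim for general such $a_m$, and, as in the paper, one should assume without loss of generality that $p_m$ is not too small so that only the asserted number of shifted cutoffs meet a given unit interval.
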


\begin{proof}
  (i) $\Rightarrow$ (ii): 
  Choose a sequence $(p_m)$ as in the above definition,
  and let $(a_m)$ be as in the additional statement of the lemma.
  %Let $m\in\N$ and $a_m \in (0,\frac{p_m}{2}]$.
  Let $m\in\N$, and let $\psi_m\from\R\to\R$ be the piecewise affine function with
  $\spt\psi_m = [-a_m,p_m+a_m]$, $\psi_m = 1$ on $[a_m,p_m-a_m]$ and 
  $\|\psi'\|_\infty = \frac{1}{2a_m}$.
  Then $\sum_{k\in\Z} \psi_m(\cdot+kp_m) = \1_\R$, the measure
  \[
    \mu_m := \sum_{k\in\Z} (\psi_m\mu)(\cdot+kp_m)
  \]
  is $p_m$-periodic and $\1_{[a_m,p_m-a_m]} \mu_m = \1_{[a_m,p_m-a_m]} \mu$.
  
  With $I_m := [-p_m,2p_m]$ we obtain
  \begin{align*}
  \norm{\mu-\mu_m}_{I_m}
   &\le \sum_{k\in\Z} \norm{\psi_m(\cdot+kp_m)\bigl(\mu-\mu(\cdot+kp_m)\bigr)}_{I_m} \\
   &\le \bigl(1+\tfrac{1}{2a_m}\bigr)
        \sum_{k\in\Z} \norm{\mu-\mu(\cdot+kp_m)}_{I_m\cap[-a_m-kp_m,p_m+a_m-kp_m]}
  \end{align*}
  by Lemma~\ref{lem:norm_psi_mu}.
  For $|k|>2$ the set $I_m\cap[-a_m-kp_m,p_m+a_m-kp_m]$ is empty;
  for $|k|\le2$ one easily shows the estimate
  \[
    \norm{\mu-\mu(\cdot+kp_m)}_{I_m\cap[-a_m-kp_m,p_m+a_m-kp_m]}
    \le |k| \cdot \norm{\mu-\mu(\cdot+p_m)}_{[-p_m,p_m]},
  \]
  where for $|k|=2$ one has to use the triangle inequality.
  It follows that $\norm{\mu-\mu_m}_{I_m} \le
  6 \bigl(1+\tfrac{1}{2a_{m\vphantom{p}}}\bigr) \norm{\mu-\mu(\cdot+p_m)}_{[-p_m,p_m]}$,
  which implies~(ii).

  Finally, assume without loss of generality that $p_m\ge2$ for all $m\in\N$.
  Then for all $a\in\R$ we have
  \[
    \abs{\mu_m}\bigl((a,a+1]\bigr)
    \le \sum_{k\in\Z} \norm{\psi_m(\cdot+kp_m)|_{(a,a+1]}}_\infty \norm{\mu}_\lu
    \le \bigl(1+\tfrac{1}{2a_m}\bigr)\norm{\mu}_\lu,
  \]
  due to the particular construction of $\psi_m$ and the fact that at most 3 summands in the second term are nonzero.

  (ii) $\Rightarrow$ (i): Since $\mu_m$ is $p_m$-periodic, we have
  \begin{align*}
  \MoveEqLeft \norm{\mu- \mu(\cdot+p_m)}_{[-p_m,p_m]} \\
   &\le \norm{\mu - \mu_m}_{[-p_m,p_m]}
      + \norm{\mu_m(\cdot+p_m) - \mu(\cdot+p_m)}_{[-p_m,p_m]} \\
   &\le 2\norm{\mu - \mu_m}_{[-p_m,2p_m]},
  \end{align*}
  and the assertion follows.
\end{proof}

\begin{remark}
The above lemma shows that a weak Gordon measure can (locally on three periods)
be very well approximated by periodic measures (where the approximation depends on $C$).
In \cite{Seifert2011} a local measure was called Gordon measure if the condition of Lemma~\ref{lem:equiv_Gordon}(ii) is satisfied for all $C>0$, 
where $\norm{\cdot}_{[-p_m,2p_{m\vphantom{p}}]}$ is replaced by the total variation norm on $[-p_m,2p_m]$.

The definition in \cite{Seifert2011} extends the previous definitions for
bounded (\cite{Gordon1976}) and locally integrable potentials (\cite{DamanikStolz2000}),
so our notion generalizes and refines the Gordon condition from those papers.
%Since \cite{Seifert2011} extends the previous works for bounded (\cite{Gordon1976}) and locally integrable potentials (\cite{DamanikStolz2000}), 
%our notion generalizes and refines the Gordon condition.
\end{remark}

We will need the following estimate for solutions to periodic measures.

\begin{lemma}[{see \cite[Lemma~2.8]{Seifert2011}}]%, \cite[Lemma~2.2.1]{Seifert2012}}]
\label{lem:periodic}
	Let $\mu$ be $p$-periodic and $z\in \C$. Let $u$ be a solution of $H_\mu u = z u$. Then
	\[
	  \norm{\begin{pmatrix} u(0) \\u'(0\rlim)\end{pmatrix}}
	  \le 2 \max\set{\norm{\begin{pmatrix} u(t) \\u'(t\rlim)\end{pmatrix}} ;\; t\in\set{-p,p,2p} }.
        \]
\end{lemma}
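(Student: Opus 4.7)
The plan is to reduce to the case $z=0$ and then run the classical Gordon three-block argument on the one-period transfer matrix. Since $u$ solves $H_\mu u = zu$ if and only if $u$ solves $H_{\mu - z\lambda}u = 0$, and $\mu - z\lambda$ is again $p$-periodic, I may assume $z=0$. Set $T := T_\mu(p,0)$ and $v := \vect(u(0), u'(0\rlim))$. Periodicity of $\mu$ combined with the uniqueness of solutions to $H_\mu u = 0$ yields $T_\mu(t+p,s+p) = T_\mu(t,s)$ for all $s,t\in\R$, from which $T_\mu(kp,0) = T^k$ for every $k\in\Z$. In particular, the three vectors appearing on the right-hand side of the claimed inequality are exactly $T^{-1}v$, $Tv$, and $T^2v$.

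Next I would use that $\det T = 1$ (recorded in the remark preceding Lemma~\ref{lem:repr_u}), so by Cayley--Hamilton
\[
  T^2 - (\operatorname{tr}T)\,T + I = 0, \qquad T + T^{-1} = (\operatorname{tr}T)\,I.
\]
Applying these to $v$ gives the two identities $v = (\operatorname{tr}T)\,Tv - T^2 v$ and $(\operatorname{tr}T)\,v = Tv + T^{-1}v$.

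Finally, I would argue by contradiction. Suppose all three of $\norm{T^{-1}v}, \norm{Tv}, \norm{T^2 v}$ are strictly less than $\tfrac12 \norm{v}$. The second identity yields
\[
  \abs{\operatorname{tr}T} \cdot \norm{v} \le \norm{Tv} + \norm{T^{-1}v} < \norm{v},
\]
so $\abs{\operatorname{tr}T} < 1$. Substituting into the first identity,
\[
  \norm{v} \le \abs{\operatorname{tr}T} \cdot \norm{Tv} + \norm{T^2 v} < \tfrac12 \norm{v} + \tfrac12 \norm{v} = \norm{v},
\]
which is impossible. I do not expect a real obstacle: the only point needing care is the translation identity $T_\mu(t+p,s+p) = T_\mu(t,s)$ for measure potentials, which is immediate from $\mu(\cdot+p) = \mu$ combined with the uniqueness of solutions recorded after Lemma~\ref{lem:expo}; everything else is a short Cayley--Hamilton calculation.
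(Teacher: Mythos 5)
Your proof is correct and is essentially the standard argument behind the cited result \cite[Lemma~2.8]{Seifert2011}: reduce to $z=0$, use periodicity to identify the three boundary vectors with $T^{-1}v$, $Tv$, $T^2v$ for the monodromy matrix $T$ with $\det T=1$, and conclude via the Cayley--Hamilton identities. The paper itself only cites the reference, and your write-up supplies precisely that proof with no gaps.
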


The following is our main result on absence of eigenvalues.

\begin{theorem}\label{thm:Gordon}
  Let $\mu\in\M$ be a weak Gordon measure with weight\/ $C>0$. Then $H_\mu$
  does not have any eigenvalues with modulus less than $C^2-\norm{\mu}_\lu$.
\end{theorem}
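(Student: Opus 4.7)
The plan is to run a contradiction argument in the spirit of Gordon's classical proof. Suppose $z\in\C$ with $|z|<C^2-\norm{\mu}_\lu$ is an eigenvalue of $H_\mu$, with nontrivial eigenfunction $u\in L_2(\R)$. By Remark~\ref{rem:ftc}(a), $u$ solves $H_{\mu-z\lambda}u=0$. Lemma~\ref{lem:u'_leq_u} gives $u'\in L_2(\R)$, so $u\in W_2^1(\R)\subseteq C_0(\R)$ by the one-dimensional Sobolev embedding, and $u'(t\rlim)\to 0$ as $|t|\to\infty$ (again by Lemma~\ref{lem:u'_leq_u}). By unique solvability, $(u(t_0),u'(t_0\rlim))\neq 0$ for every $t_0$, so after translating the origin we may assume $u(0)\neq 0$.

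Next I would invoke Lemma~\ref{lem:equiv_Gordon} with $a_m:=p_m/4$ to produce a sequence of $p_m$-periodic measures $\mu_m\in\M$ with $p_m\to\infty$, $e^{Cp_m}\norm{\mu-\mu_m}_{[-p_m,2p_m]}\to 0$, and $\norm{\mu_m}_\lu\to\norm{\mu}_\lu$. Set $\nu_m:=\mu-\mu_m$ and let $u_m$ be the solution of $H_{\mu_m-z\lambda}u_m=0$ with $u_m(0)=u(0)$ and $u_m'(0\rlim)=u'(0\rlim)-c_{\nu_m,0}u(0)$, matching the hypothesis of Proposition~\ref{prop:stability}. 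Applying that proposition with $\mu_1:=\mu_m-z\lambda$, $\mu_2:=\mu-z\lambda$, and using Proposition~\ref{prop:exp_growth} to take $\omega_m:=\norm{\mu_m-z\lambda}_\lu^{1/2}$ in~\eqref{eq:exp-bound}, yields for large $m$ and $t\in[-p_m,2p_m]$
\[
  |u_m(t)-u(t)|\le K e^{\omega_m|t|}\norm{u}_\infty\norm{\nu_m}_{[-p_m,2p_m]},
\]
with $\omega_m\to(\norm{\mu}_\lu+|z|)^{1/2}<C$. A parallel integration-by-parts in the second row of the vector identity of Lemma~\ref{lem:repr_u} yields the same type of bound on $|u_m'(t\rlim)-u'(t\rlim)|$.

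The contradiction is now obtained by applying Lemma~\ref{lem:periodic} to the $p_m$-periodic solution $u_m$:
\[
  \Bigl\|\tbinom{u_m(0)}{u_m'(0\rlim)}\Bigr\|\le 2\max_{t\in\{-p_m,p_m,2p_m\}}\Bigl\|\tbinom{u_m(t)}{u_m'(t\rlim)}\Bigr\|.
\]
The left side is bounded below by $|u_m(0)|=|u(0)|>0$, whereas each term on the right is controlled, via the triangle inequality, by $\|(u(t),u'(t\rlim))\|+K e^{\omega_m|t|}\norm{u}_\infty\norm{\nu_m}_{[-p_m,2p_m]}$. The first summand vanishes as $m\to\infty$ since $u,u'\in C_0(\R)$ and $|t|\to\infty$, and the second is intended to tend to $0$ by comparing $\omega_m<C$ against the weak Gordon decay $e^{Cp_m}\norm{\nu_m}_{[-p_m,2p_m]}\to 0$.

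The main obstacle I anticipate is the quantitative matching of these exponential factors: at face value the stability estimate contributes $e^{\omega_m\cdot 2p_m}$ at $t=2p_m$, whereas the Gordon condition only furnishes decay at rate $e^{-Cp_m}$. To bridge the gap to the stated sharp bound $|z|<C^2-\norm{\mu}_\lu$ (rather than a weaker $|z|<C^2/4-\norm{\mu}_\lu$) one must revisit the proof of Lemma~\ref{lem:periodic} and split into the two regimes of the transfer matrix $M_m:=T_{\mu_m-z\lambda}(p_m,0)$: when $|\tr M_m|\ge 1$, Cayley--Hamilton gives $\tr(M_m)(u_m(0),u_m'(0\rlim))^T=(u_m(p_m),u_m'(p_m\rlim))^T+(u_m(-p_m),u_m'(-p_m\rlim))^T$, which involves only $|t|\le p_m$ and therefore only the harmless factor $e^{\omega_m p_m}$; the complementary regime $|\tr M_m|<1$ needs separate handling, and this case analysis is the delicate engine driving the optimal exponent.
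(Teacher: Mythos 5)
Your overall strategy is the right one, and you have correctly located the quantitative obstacle, but your proposed resolution does not close it, and the paper's actual fix is different. The trace dichotomy only helps in the regime $\abs{\operatorname{tr} M_m}\ge 1$, where $(\operatorname{tr} M_m)\bigl(u_m(0),u_m'(0\rlim)\bigr)^T=\bigl(u_m(p_m),u_m'(p_m\rlim)\bigr)^T+\bigl(u_m(-p_m),u_m'(-p_m\rlim)\bigr)^T$ indeed involves only $\abs{t}\le p_m$. In the complementary regime $\abs{\operatorname{tr} M_m}<1$, Cayley--Hamilton forces you to use two iterates of $M_m$ (or of $M_m^{-1}$) from the anchor point, i.e.\ the Cauchy data at $2p_m$ (or at $-2p_m$); with your anchoring at $0$ this costs a factor $e^{2\omega_m p_m}$ in the stability estimate, which the Gordon decay $e^{-Cp_m}$ cannot absorb unless $2\omega_m<C$, i.e.\ unless $\abs{z}<C^2/4-\norm{\mu}_\lu$. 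You flag this case as ``needing separate handling'' but do not supply it, and within your framework there is nothing to supply: the three-period structure of Lemma~\ref{lem:periodic} is unavoidable, so the loss must be repaired on the other side, in the perturbation estimate.

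The paper's solution is to move the anchor point into the interior of $[0,p_m]$. By the additional statement of Lemma~\ref{lem:equiv_Gordon} one can choose $\mu_m$ with $\1_{[a_m,p_m-a_m]}\mu_m=\1_{[a_m,p_m-a_m]}\mu$, where $a_m\to\infty$ but $a_m/p_m\to 0$ (your choice $a_m=p_m/4$ violates the second requirement and would again cost a constant in the exponent). Matching the Cauchy data of $u_m$ and $u$ at $a_m$ then gives $u_m\equiv u$ on the \emph{whole} interval $[a_m,p_m-a_m]$, and Proposition~\ref{prop:stability} is applied twice: anchored near $a_m$ to reach $[-p_m,a_m]$, and anchored near $p_m-a_m$ to reach $[p_m-a_m,2p_m]$. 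Each of the three points $-p_m,p_m,2p_m$ is at distance at most $p_m+a_m+1=(1+o(1))p_m$ from its anchor, so the exponential cost is $e^{\omega_m(p_m+a_m+1)}\le e^{Cp_m}$ for large $m$ once $\omega_m^2\to\norm{\mu}_\lu+\abs{z}<C^2$, and Lemma~\ref{lem:periodic} can then be used exactly as stated. Two further conveniences of this anchoring: $\nu$ vanishes near the anchor, so the constant in~\eqref{eq:normalternative} is $0$ and no derivative correction of the initial data is needed; and the separate bound you propose for $u_m'-u'$ is avoided by using $u\in C_0(\R)$ together with Lemma~\ref{lem:u'_leq_u} to control $u_m'$ wherever $\abs{u_m}$ is already known to be small.
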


\begin{proof}
  Let $(p_m)$ and $(\mu_m)$ be as in Lemma~\ref{lem:equiv_Gordon}(ii),
  without loss of generality $p_m\ge 2$ for all $m\in\N$.
  Let the additional statement of the lemma be satisfied with a sequence $(a_m)$ satisfying
  $p_m+a_m\in\N$ for all $m\in\N$, $a_m\to\infty$ and $\rfrac{a_m}{p_m}\to 0$.

  Assume that $z\in \C$ with $\abs{z} < C^2-\norm{\mu}_\lu$ is an eigenvalue of $H_\mu$.
  Let $u$ be a corresponding eigenfunction; then $u\in W_2^1(\R)\subseteq C_0(\R)$.
  %and Lemma~\ref{lem:u'_leq_u} yields $u'(t\rlim)\to 0$ as $t\to \pm\infty$.
  %Let $\eps>0$. There exists $t_0>0$ such that $\norm{(u(t),u'(t\rlim))} \le \eps$ for $\abs{t}>t_0$.

  For $m\in\N$ let $u_m$ be the solution of $H_{\mu_m} u_m = zu_m$ satisfying 
  $u_m(a_m) = u(a_m)$, $u_m'(a_m\rlim) =  u'(a_m\rlim)$.
  Then $u_m=u$ on $[a_m,p_m-a_m]$ since $\mu_m=\mu$ on this interval.
  We now apply Proposition~\ref{prop:stability} with $\nu = (\mu_m-z\lambda) - (\mu-z\lambda)$.
  Note that $\1_{[a_m,a_m+2]}\nu = 0$, so
  one has $c_{\nu,a_{m}+1} = 0$ in~\eqref{eq:normalternative}.
  Setting $\omega_m := \norm{\mu_m-z\lambda}_\lu^{1/2}$ and taking into account
  Proposition~\ref{prop:exp_growth}, we obtain
  \[
    \abs{u(t)-u_m(t)} \le C_m e^{\omega_m\abs{t-(a_m+1)}}\norm{\mu-\mu_m}_{[-p_m,a_m+1]}
    \quad \bigl( m\in\N,\ t\in[-p_m,a_m] \bigr)
  \]
  with $C_m>0$ depending only on $\omega_m$, and similarly for $t\in[p_m-a_m,2p_m]$.
  Therefore,
  \begin{equation}\label{eq:uum-est}
    \sup_{t\in[-p_m,2p_m]}\abs{(u-u_m)(t)}
    \le C_m e^{\omega_m(p_m+a_m+1)}\norm{\mu-\mu_m}_{[-p_m,2p_m]}.
  \end{equation}
  
  By Lemma~\ref{lem:equiv_Gordon} and the assumption on $z$ we have
  %can estimate $\omega_m^2 = \norm{\mu_m-z\lambda}_\lu$ as follows:
  %Recall that $\norm{\mu_m}_\lu \le \bigl(1+\frac{1}{2a_{m\vphantom{p}}}\bigr)\norm{\mu}_\lu \to \norm{\mu}_\lu$
  %and $\norm{\mu}_\lu + \abs{z} < C^2$. Thus there exists $m_0\in\N$ such that
  \[
    \omega_m^2 %= \norm{\mu_m-z\lambda}_\lu
    \le \norm{\mu_m}_\lu + \abs{z}
    \le \bigl(1+\tfrac{1}{2a_{m\vphantom{p}}}\bigr) \norm{\mu}_\lu + \abs{z}
    \longrightarrow \norm{\mu}_\lu + \abs{z}
    < C^2.
  \]
  %for all $m\ge m_0$.
  Hence, for large $m$ we obtain
  \[
    \omega_m(p_m+a_m+1) = \omega_m p_m \bigl(1+\rfrac{a_m+1}{p_m}\bigr) \leq Cp_m.
  \]
  We conclude that the right hand side in~\eqref{eq:uum-est} tends to zero as $m\to\infty$.
  Therefore, given $\eps>0$
  there exists $m_0\in\N$ such that $\abs{u(t)-u_m(t)}\le \eps$ for all
  $m\ge m_0$ and $t\in[-p_m,2p_m]$. Since $u \in C_0(\R)$, we can choose
  $m_0$ such that $\abs{u(t)} \le \eps$ for $\abs{t} \ge p_{m_0}-1 =: t_0$.
  Then $\abs{u_m} \le 2\eps$ on $[-p_m,2p_m] \setminus (-t_0,t_0)$.
  By Lemma~\ref{lem:u'_leq_u} it follows that $\abs{u_m'} \le 2\eps(2+\norm{\mu_m}_\lu)$ on that set.
  
  Thus we have shown
  \[
    \bigl(u_m(\pm p_m),u_m'(\pm p_m\rlim)\bigr),
    \bigl(u_m(2p_m),u_m'(2p_m\rlim)\bigr) \to 0 \qquad (m\to\infty).
  \]
  By Lemma~\ref{lem:periodic} we conclude that
  $\bigl(u_m(0),u_m'(0\rlim)\bigr) \to 0$ as $m\to\infty$.
  For any compact interval $I\subseteq\R$,
  Lemma~\ref{lem:expo} yields $u_m\to 0$ uniformly on $I$.
  Since $u_m\to u$ uniformly on $I$ by~\eqref{eq:uum-est},
  we obtain $u = 0$, a contradiction.
\end{proof}

As an immediate consequence of Theorem~\ref{thm:Gordon} we obtain the following version of Gordon's theorem.

\begin{corollary}
\label{cor:no_ev}
  Let $\mu$ be a weak Gordon measure with weight\/ $C$ for all\/ $C>0$.
  Then $H_\mu$ does not have any eigenvalues.
\end{corollary}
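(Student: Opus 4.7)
The plan is to apply Theorem~\ref{thm:Gordon} with an arbitrarily large weight. Concretely, suppose for contradiction that $z\in\C$ is an eigenvalue of $H_\mu$. I would then pick $C>0$ large enough that $|z| < C^2-\norm{\mu}_\lu$; for example, $C := \bigl(\abs{z}+\norm{\mu}_\lu+1\bigr)^{1/2}$ does the job. By the hypothesis of the corollary, $\mu$ is a weak Gordon measure with this particular weight~$C$, so Theorem~\ref{thm:Gordon} applies and tells us that $H_\mu$ has no eigenvalues of modulus strictly less than $C^2-\norm{\mu}_\lu$. But $\abs{z}<C^2-\norm{\mu}_\lu$ by the choice of~$C$, which is the desired contradiction.

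Because the statement is a direct consequence of Theorem~\ref{thm:Gordon}, there is no real obstacle here; the only thing to notice is that the assumption is quantified over \emph{all} $C>0$, which is exactly what is needed to push the exclusion zone $\set{z\in\C;\;\abs{z}<C^2-\norm{\mu}_\lu}$ out to the whole complex plane as $C\to\infty$. No further analytic work beyond Theorem~\ref{thm:Gordon} is required.
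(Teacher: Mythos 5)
Your argument is correct and is exactly the one the paper intends: the corollary is stated there as an immediate consequence of Theorem~\ref{thm:Gordon}, obtained by choosing, for any candidate eigenvalue $z$, a weight $C$ with $C^2-\norm{\mu}_\lu>\abs{z}$. Nothing further is needed.
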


By a scaling argument we can improve the eigenvalue bound $C^2-\norm{\mu}_\lu$
from Theorem~\ref{thm:Gordon}.

\begin{corollary}
\label{cor:scaling}
  Let $\mu\in\M$ be a weak Gordon measure with weight\/ $C>0$, and let $r>0$.
  Then $H_\mu$ does not have any eigenvalues with modulus less than
  $C^2-\norm{\mu}_{\lu,r}$, where 
  \[
    \norm{\mu}_{\lu,r} := \smash{\rfrac1r \sup_{a\in\R} \abs{\mu}((a,a+r])}.
  \]
  (Note that $\norm{\mu}_{\lu,1} = \norm{\mu}_\lu$.)
\end{corollary}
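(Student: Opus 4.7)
The plan is to carry out a rescaling argument that reduces the statement to the already-proved Theorem~\ref{thm:Gordon}. For $r>0$ define the measure $\sigma \in \M$ by $\sigma(A) := r\mu(rA)$, that is, $r$ times the pushforward of $\mu$ under $t\mapsto t/r$. A direct distributional computation (using $t = rs$) will show that if $u$ is an $L_2$-eigenfunction of $H_\mu$ for eigenvalue $z$, then $\tilde u(s) := u(rs)$ lies in $L_2(\R)$ (with $\norm{\tilde u}_2 = r^{-1/2}\norm{u}_2$) and is an $L_2$-eigenfunction of $H_\sigma$ for eigenvalue $r^2 z$. A short computation with $|\sigma|((a,a+1]) = r|\mu|((ra,ra+r])$ also gives $\norm{\sigma}_\lu = r^2 \norm{\mu}_{\lu,r}$.

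The heart of the proof is to verify that $\sigma$ is a weak Gordon measure with weight $rC$, using the sequence $q_m := p_m/r$ associated to the weak Gordon condition for~$\mu$. Writing out the translations shows $\sigma - \sigma(\cdot+q_m) = r\cdot T_*\rho_m$, where $\rho_m := \mu - \mu(\cdot+p_m)$ and $T_*$ denotes the pushforward under $t\mapsto t/r$. For an admissible test function $u$ on $[-q_m, q_m]$ the substitution $w(t) := u(t/r)$ yields $\int u\,dT_*\rho_m = \int w\,d\rho_m$, where $w$ has $\spt w \subseteq [-p_m,p_m]$, $\diam\spt w \le 2r$ and $\norm{w'}_\infty \le 1/r$. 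Decomposing $w$ using the tent-function partition of unity $\tau_j(t) := \max(1-|t-j|,0)$ (in the spirit of Remark~\ref{rem:norm_spt}, but adapted so that every piece remains inside $[-p_m,p_m]$) produces at most $\lceil 2r\rceil+2$ pieces $w\tau_j$ of diameter $\le 2$, each with $\norm{(w\tau_j)'}_\infty \le 1/r + \norm{w}_\infty \le 1/r + 1$. Summing over $j$ yields a bound $|\int w\,d\rho_m| \le K(r)\norm{\rho_m}_{[-p_m,p_m]}$ with $K(r)$ independent of $m$. Consequently
\[
  e^{rC q_m}\norm{\sigma-\sigma(\cdot+q_m)}_{[-q_m,q_m]}
  \le rK(r)\cdot e^{Cp_m}\norm{\rho_m}_{[-p_m,p_m]} \longrightarrow 0,
\]
so $\sigma$ is weak Gordon with weight $rC$.

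Applying Theorem~\ref{thm:Gordon} to $\sigma$, $H_\sigma$ has no eigenvalues of modulus less than $(rC)^2 - \norm{\sigma}_\lu = r^2(C^2 - \norm{\mu}_{\lu,r})$. Combined with the correspondence $z\mapsto r^2 z$ between eigenvalues of $H_\mu$ and $H_\sigma$, this gives exactly the claimed bound for~$H_\mu$. The main technical obstacle is the estimate in the second paragraph: the diameter-$\le 2$ restriction in the definition of $\norm{\cdot}_I$ does not scale naturally, so some care is needed to bound the pushforward seminorm by the original one with a constant that depends only on~$r$. Since the resulting factor $rK(r)$ is independent of~$m$, it does not interfere with the exponential decay, and the rescaling closes the argument.
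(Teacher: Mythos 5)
Your proposal is correct and follows essentially the same route as the paper: rescale to $\mu_r(A)=r\mu(rA)$, use the unitary conjugation to transport eigenvalues by $z\mapsto r^2z$, verify the weak Gordon condition with weight $rC$ by pulling test functions back through the scaling (the paper handles the lost diameter constraint by invoking Remark~\ref{rem:norm_spt} directly, which is the same tent-function decomposition you describe), compute $\norm{\mu_r}_\lu=r^2\norm{\mu}_{\lu,r}$, and apply Theorem~\ref{thm:Gordon}.
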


\begin{proof}
Let $\mu_r := r\mu(r\mkern1mu\cdot)$,
and define $B_r \from L_2(\R) \to L_2(\R)$ by $B_ru := u(\frac\cdot r)$.
A~straightforward computation shows that $B_r^{-1}H_\mu B_r = r^{-2} H_{\mu_r}$.

We now show that $\mu_r$ is a weak Gordon measure with weight $Cr$.
Let $(p_m)$ be a sequence of periods for the weak Gordon measure $\mu$.
For $u\in W_{\!\infty}^1(\R)$ with $\spt u \subseteq [-\rfrac{p_m}{r},\rfrac{p_m}{r}]$,
$\diam\spt u \le 2$ and $\norm{u'}_\infty\le 1$ we have
\[
  \int_{-p_m/r}^{p_m/r} u\, d\bigl(\mu_r-\mu_r(\cdot+\rfrac{p_m}{r})\bigr)
  %\int ru\, d\bigl(\mu(r\mkern1mu\cdot)-\mu(r\mkern1mu\cdot+p_m)\bigr)
  = \int_{-p_m}^{p_m} ru(\rfrac\cdot r)\, d\bigl(\mu-\mu(\cdot+p_m)\bigr).
\]
Since $u_r := ru(\rfrac\cdot r)$ satisfies $\diam\spt u_r \le 2r$ and
$\norm{u_r'}_\infty\le 1$, we infer by Remark~\ref{rem:norm_spt} that
\[
  \norm{\mu_r-\mu_r(\cdot+\rfrac{p_m}{r})}_{[-p_m/r,p_m/r]}
  \le (r+1)^2 \norm{\mu-\mu(\cdot+p_m)}_{[-p_m,p_m]}.
\]
Multiplying by $\exp(Cr\rfrac{p_m}{r})$ we deduce that
$\mu_r$ is indeed a weak Gordon measure with weight $Cr$.
Therefore, by Theorem~\ref{thm:Gordon}, $H_{\mu_r}$ has no eigenvalues $z$ with
$|z| < (Cr)^2 - \norm{\mu_r}_\lu$.

To conclude the assertion for $H_\mu = r^{-2} B_r H_{\mu_r} B_r^{-1}$,
it remains to observe that $\norm{\mu_r}_\lu = r^2 \norm{\mu}_{\lu,r}$.
%It follows that $H_\mu = r^{-2} B_r H_{\mu_r} B_r^{-1}$ has no eigenvalues $z$
%with $|z| < C^2 - \norm{\mu}_{\lu,r}$.
\end{proof}

\begin{remark}
\label{rem:improvements}
(a) The supremum of all $C>0$ such that $\mu$ is a weak Gordon measure with weight $C$ is given by
\[
  C_\mu := -\liminf_{p\to\infty} \rfrac1p \ln \norm{\mu - \mu(\cdot+p)}_{[-p,p]}
  \vspace{-1ex}
\]
if $C_\mu>0$.

(b) Assume that $C_\mu>0$. Then by part~(a) and Corollary~\ref{cor:scaling},
$H_\mu$ does not have any eigenvalues $z$ with
\[
  |z| < E_\mu := C_\mu^2 - \inf_{r>0} \norm{\mu}_{\lu,r}.
\]
It is easy to see that
$\norm{\mu}_{\lu,r} \le (1+\rfrac sr) \norm{\mu}_{\lu,s}$ for all $r>s>0$.
Thus, $\lim_{r\to\infty} \norm{\mu}_{\lu,r} = \inf_{r>0} \norm{\mu}_{\lu,r}$.

In the next section we will show that $-E_\mu$ can occur as an eigenvalue,
so the above bound is sharp.

(c) In the proof of Theorem~\ref{thm:Gordon} we have shown that $H_\mu u = zu$
does not have any solutions in $C_0(\R)$, for $z$ with small modulus.
Hence, we also obtain absence of eigenvalues for $H_\mu$
considered as an operator in $L_p(\R)$ with $1\le p < \infty$.
\end{remark}

We now apply our main result to quasiperiodic measures.

\begin{example}
  Let $\mu_1,\mu_2\in\M$ be $1$- and $\alpha$-periodic, respectively, where $\alpha>0$ is irrational.
  Then $\mu := \mu_1+\mu_2$ is a quasiperiodic measure.
  Under a suitable assumption on $\alpha$ we are going to prove that $H_\mu$ has no eigenvalues.
  Analogous results have been shown in \cite{DamanikStolz2000} under an additional assumption
  on the oscillation of $V_2$, where $\mu_2=V_2\lambda$, and in \cite{Seifert2011}
  under a H\"older condition on the translates of $\mu_2$.
  Using our weak Gordon condition, we can remove these additional assumptions on $\mu_2$ altogether.
  %showing that we can eliminate the additional assumptions in previous works.
  %In \cite{DamanikStolz2000} there appeared a condition on the oscillation of the potential function. 
  %In \cite{Seifert2011} a H\"older condition on the translation of the measure was used.
  %Since we work with a weaker metric than the one induced by the total varation, we do not need any additional assumption on the measure.

  As in \cite{DamanikStolz2000,Seifert2011} we assume that
  there exists $B>0$ and a sequence $(\rfrac{p_m}{q_m})$ in $\Q$ such that
  \[
    \abs{\alpha - \frac{p_m}{q_m}} \le Bm^{-q_m} \qquad (m\in\N);
  \]
  in particular, $\alpha$ is a Liouville number.
  As for the Liouville numbers one sees that the set of all numbers $\alpha$
  with the above property is a dense $G_\delta$ set.
  %we will call such a number $\alpha$ a \emph{strong Liouville number}.
  %Note that every strong Liouville number is a Liouville number.
  %Moreover, the set of strong Liouville numbers is a dense $G_\delta$ set,
  %which is shown in the same way as for the Liouville numbers.

  Using the periodicity of $\mu_1,\mu_2$ and Remark~\ref{rem:norm-comp}(b),
  we now show that $\mu$ is a weak Gordon measure with weight $C$, for all $C>0$:
  \begin{align*}
  \norm{\mu-\mu(\cdot+p_m)}_{[-p_m,p_m]}
  &= \norm{\mu_2-\mu_2(\cdot+p_m-\alpha q_m)}_{[-p_m,p_m]} \\
  &\le 3\abs{p_m-\alpha q_m} \norm{\mu_2}_\lu
    = 3q_m\abs{\rfrac{p_m}{q_m}-\alpha} \norm{\mu_2}_\lu
  \end{align*}
  and hence
  \[
    e^{Cp_m} \norm{\mu-\mu(\cdot+p_m)}_{[-p_m,p_m]}
    \le \exp\bigl(Cq_m\rfrac{p_m}{q_m}\bigr) \cdot 3q_m Bm^{-q_m} \norm{\mu_2}_\lu \to 0
  \]
  as $m\to\infty$. By Corollary~\ref{cor:no_ev} it follows that $H_\mu$ does not have any eigenvalues.
\end{example}

\section{Sharpness of the condition}

In this section we show that our results are sharp in the following sense:
we construct a measure $\mu\in\M$ such that the operator $H_\mu$ has $-1$ as
an eigenvalue, $\mu$~is a weak Gordon measure with weight $C$ for all $C<1$,
and $\inf_{r>0} \norm{\mu}_{\lu,r} = 0$.
With the notation of Remark~\ref{rem:improvements} we then have
$C_\mu = 1$ and $E_\mu = 1$.

The construction will be such that the Gordon condition is satisfied
with arbitrarily many periods, not just with three periods.
In fact, we construct a symmetric eigenfunction to the eigenvalue $-1$ in the following way.
We first fix $u$ on an infinite discrete set $T\subseteq\R$ and then choose
the continuous continuation to $\R$ satisfying $u'' = u$ on $\R\setminus T$.
We will determine $T$ and also $u$ on~$T$ recursively.

Choose a sequence $(l_m)_{m\in\N}$ in $(0,\infty)$ satisfying $l_m\to \infty$.
%(We will require some additional conditions on $(l_k)$ later).
Set $p_0 := 0$, $T_0 := \set{0}$ and $u(0) := 1$. Recursively we define, for $m\in\N$,
\begin{align*}
p_m & := 2(m-1)p_{m-1}+l_m \\ %= \sum_{j=1}^{k} l_j\cdot \prod_{m=j}^{k-1} (2m), \\
T_m &:= \set{jp_m+t ;\; |j|\le m,\ t\in T_{m-1}}  \cap[-mp_m,mp_m], \\[1ex]
u(jp_m+t) &:= 2^{-m|j|} u(t) \qquad \bigl(jp_m+t\in T_m\setminus T_{m-1}\bigr).
\end{align*}
Let $T:=\bigcup_{m\in\N_0} T_m$. Then $u$ is fixed on the discrete set $T$.
Consider the unique continuous continuation $u\from\R\to(0,\infty)$ satisfying $u''=u$ on $\R\setminus T$.
It is easy to see that $u\in L_2(\R)$.
Moreover, note that the derivative of $u$ has jumps precisely at $T$.
Thus, $u$ solves $-u''+\mu u = -u$ for some pure-point measure $\mu$ with $\spt\mu = T$.

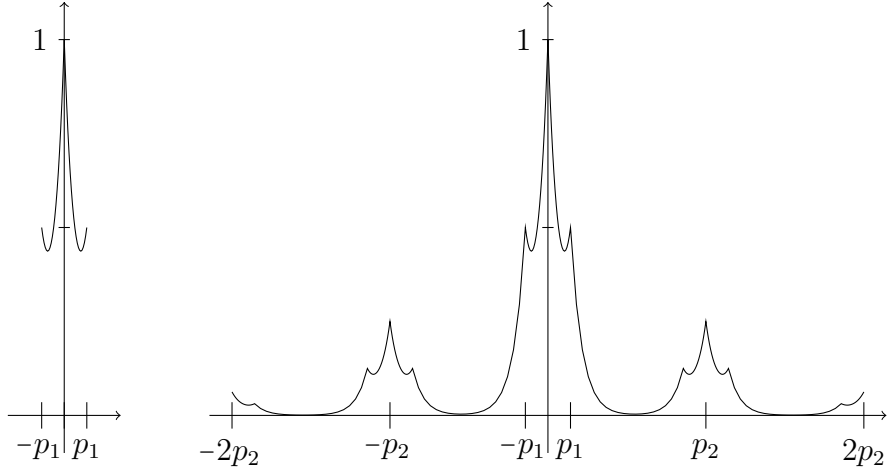
\begin{figure}[htb]
\renewcommand\-{%
    \settowidth{\dimen0}{$0$}%
    \!\smash{\resizebox{1.1\dimen0}{\height}{$-$}}\mkern0.5mu%
}
\centering
\begin{tikzpicture}[xscale=0.15,yscale=5]
\draw[->] (-5,0)--(5,0);
\draw[->] (0,-0.1)--(0,1.1);

\foreach \x/\label in {-2/$\-p_1$,2/$p_1$,0/\vphantom{$2p_2$}}
     		\draw (\x,1pt) -- (\x,-1pt)
			node[anchor=north] {\label};

\draw (0.5,1) -- (-0.5,1) node[left]{$1$};
\draw (0.5,0.5) -- (-0.5,0.5);
			
\draw plot file {plot_data1.data};
\end{tikzpicture}
\qquad
\begin{tikzpicture}[xscale=0.15,yscale=5]
\draw[->] (-30,0)--(30,0);
\draw[->] (0,-0.1)--(0,1.1);

\foreach \x/\label in {-28/$\-2p_2$,-14/$\-p_2$,-2/$\-p_1$,2/$p_1$,14/$p_2$,28/$2p_2$}
     		\draw (\x,1pt) -- (\x,-1pt)
			node[anchor=north] {\label};
			
\draw (0.5,1) -- (-0.5,1) node[left]{$1$};
\draw (0.5,0.5) -- (-0.5,0.5);

\draw plot file {plot_data2.data};
\end{tikzpicture}
\caption{$u$ defined on the intervals corresponding to $T_1$ and to $T_2$.}
\end{figure}

We now assume that
\begin{equation}\label{eq:lk-cond}
  \frac{2(m-1)p_{m-1}+m\ln 2}{l_m} \to 0 \qquad (m\to\infty).
\end{equation}
Since then $2^m \le e^{l_m}$ for large $m$, a straightforward computation yields boundedness of the masses of single points (see also Remark \ref{rem:mu_lokbeschr} below).
Furthermore, $T=\spt\mu$ is uniformly discrete (i.e., there exists a minimal distance between any two points in $T$). These two facts imply $\mu\in\M$.
Hence, $-1$ is an eigenvalue of $H_\mu$ (with eigenfunction $u$).
%By construction, $u$ solves $H_\mu u = -u$. Thus, $-1$ is an eigenvalue of $H_\mu$.
%Hence, $u$ is an $L_2$-solution of $H_\mu u = -u$ for some pure point measure $\mu$ with $\spt\mu=T$.

We will now investigate $\mu$ and show that $\mu$ is indeed a weak Gordon measure with weight $C$ for all $C<1$.
Let $s_m := (m-1)p_{m-1}$ and $t_m := s_m+l_m = p_m-s_m$.
By construction, the measures 
\[
  \1_{(-mp_m,(m-1)p_m)}\mu \quad\text{and}\quad
  \1_{(-mp_m,(m-1)p_m)}\bigl(\mu(\cdot+p_m)\bigr)
\]
differ only at the points $-s_m$ and $-t_m$, and there the corresponding point masses are interchanged.
(It suffices to consider the open intervals here since the test functions for computing
the $\norm{\cdot}_I$-norm are zero at the boundary.)  
With the help of the next lemma we can compute the difference $\mu(\set{-s_m}) - \mu(\set{-t_m})$.

\begin{lemma}
\label{lem:mu_diff}
  Let $l,L>0$, and let $a,b,c,d>0$ satisfy $\frac{a}{b} = \frac{d}{c}$.
  Let $u\in C[-l,L+l]$ such that $u(-l) = a$, $u(0) = b$, $u(L) = c$ and $u(L+l) = d$.
  Assume that $u$ is twice continuously differentiable on $(-l,L+l)\setminus\set{0,L}$, with $u'' = u$.
  Then the measure $\mu$ defined by $-u''+\mu u = -u$ satisfies $\spt \mu \subseteq\set{0,L}$ and
  \[
    \mu(\set{0}) - \mu(\set{L}) = \frac{2\frac{c}{b}-2\frac{b}{c}}{e^L-e^{-L}}\,.
  \]
\end{lemma}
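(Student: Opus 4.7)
The plan is to compute $\mu$ explicitly in terms of the jumps of $u'$ at the two interior nodes, and then verify that the hypothesis $a/b = d/c$ causes the contributions from the outer intervals to cancel.

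First, since $u$ is continuous on $[-l, L+l]$ and $C^2$ off the two-point set $\{0,L\}$ with $u''=u$ pointwise there, in the distributional sense one has
\[
  u'' = u\lambda + \bigl(u'(0\rlim)-u'(0\llim)\bigr)\delta_0 + \bigl(u'(L\rlim)-u'(L\llim)\bigr)\delta_L.
\]
The equation $-u''+\mu u = -u$ then forces $\mu u$ to equal the pure atomic part of $u''$, so $\spt\mu\subseteq\{0,L\}$ and
\[
  \mu(\{0\}) = \frac{u'(0\rlim)-u'(0\llim)}{b}, \qquad
  \mu(\{L\}) = \frac{u'(L\rlim)-u'(L\llim)}{c}.
\]

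Next, on each of the three intervals $(-l,0)$, $(0,L)$, $(L,L+l)$, I would write $u$ as a linear combination of $e^t$ and $e^{-t}$ and use the prescribed boundary values to solve for the coefficients. A routine computation then yields
\begin{align*}
u'(0\llim) &= \frac{b(e^l+e^{-l}) - 2a}{e^l-e^{-l}}, &
u'(0\rlim) &= \frac{2c - b(e^L+e^{-L})}{e^L-e^{-L}}, \\
u'(L\llim) &= \frac{c(e^L+e^{-L}) - 2b}{e^L-e^{-L}}, &
u'(L\rlim) &= \frac{2d - c(e^l+e^{-l})}{e^l-e^{-l}}.
\end{align*}

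Finally, I would form the difference $\mu(\{0\})-\mu(\{L\})$ and group the terms according to whether their denominator is $e^L-e^{-L}$ or $e^l-e^{-l}$. The terms with denominator $e^L-e^{-L}$ collapse to $\frac{2(c/b-b/c)}{e^L-e^{-L}}$, as the factors $(e^L+e^{-L})$ cancel. The terms with denominator $e^l-e^{-l}$ simplify to $\frac{2(a/b-d/c)}{e^l-e^{-l}}$, which vanishes by the hypothesis $a/b=d/c$. This proves the stated formula. The only mildly delicate point is the bookkeeping in this last cancellation; everything else is purely mechanical.
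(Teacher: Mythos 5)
Your proof is correct. The starting point is identical to the paper's: both reduce the claim to the jump formula
\[
  \mu(\set{0}) - \mu(\set{L})
  = \frac{u'(0\rlim)-u'(0\llim)}{b} - \frac{u'(L\rlim)-u'(L\llim)}{c},
\]
and both compute $u$ explicitly on the middle interval $[0,L]$ to evaluate $\tfrac1b u'(0\rlim)+\tfrac1c u'(L\llim)$, which yields the stated right-hand side. The only genuine difference is how the remaining terms $\tfrac1b u'(0\llim)+\tfrac1c u'(L\rlim)$ are disposed of. You solve the boundary value problems on the two outer intervals explicitly and observe that these terms combine to $\frac{2(a/b-d/c)}{e^l-e^{-l}}=0$; the paper instead notes that $\tfrac1b u(-t)$ and $\tfrac1c u(L+t)$ are two solutions of $w''=w$ on $[0,l]$ agreeing at $t=0$ and $t=l$ (by the hypothesis $a/b=d/c$), hence agree identically by uniqueness of the boundary value problem, giving $\tfrac1b u'(0\llim)+\tfrac1c u'(L\rlim)=0$ with no computation. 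Your route costs two extra explicit solves and a cancellation check (all of which you report correctly -- I verified the four one-sided derivatives), while the paper's symmetry argument is shorter and makes the role of the hypothesis $a/b=d/c$ more transparent. Both are complete proofs.
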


\begin{proof}
  First observe that $\frac1b u(-t) = \frac1c u(L+t)$ holds for $t=0$ and for $t=l$,
  and hence for all $t\in[-l,0]$ by uniqueness of the boundary value problem for $u''=u$.
  It follows that $\frac{u'(0\llim)}{u(0)} + \frac{u'(L\rlim)}{u(L)} = 0$.
  Therefore,
  \begin{equation}\label{eq:mu-diff}
  \begin{split}
  \mu(\set{0}) - \mu(\set{L})
    &= \frac{u'(0\rlim)-u'(0\llim)}{u(0)} - \frac{u'(L\rlim)-u'(L\llim)}{u(L)} \\
    &= \frac1b\mkern1mu u'(0\rlim) + \frac1c\mkern1mu u'(L\llim).
  \end{split}
  \end{equation}
  A straightforward computation yields
  \begin{equation}\label{eq:usol}
    u(t) = \frac{c-e^{-L}b}{e^{L}-e^{-L}} e^{t} + \frac{e^{L}b-c}{e^{L}-e^{-L}} e^{-t}
  \end{equation}
  for all $t\in[0,L]$, so from~\eqref{eq:mu-diff} we deduce that
  \begin{align*}
  \mu(\set{0}) - \mu(\set{L})
   &= \frac1b \cdot \frac{2c-(e^{L}+e^{-L})b}{e^{L}-e^{-L}} + \frac1c \cdot \frac{(e^{L}+e^{-L})c-2b}{e^{L}-e^{-L}} \\
   &= \frac{2\frac{c}{b}-2\frac{b}{c}}{e^L-e^{-L}}\,.
   \qedhere
  \end{align*}
\end{proof}

\begin{remark}
\label{rem:mu_lokbeschr}
  If $e^{-L} \le \frac{c}{b} \le e^L$, then the two coefficients in the right hand side of~\eqref{eq:usol}
  are non-negative, and this implies $\bigl|\frac{u'(0\rlim)}{u(0)}\bigr|,
  \bigl|\frac{u'(L\llim)}{u(L)}\bigr| \le 1$.
  (In the following we will have $\frac{c}{b} =2^{-m}$.)
\end{remark}

We now apply Lemma \ref{lem:mu_diff} with $l=l_1$ and $L=l_m$, for $m\ge2$.
Note that $u(t_m) = u(1p_m-s_m) = 2^{-m\cdot 1}u(-s_m) = 2^{-m}u(s_m)$. Therefore,
\[
  \mu(\set{s_m}) - \mu(\set{t_m}) = \frac{2\cdot 2^{-m}-2\cdot 2^m}{e^{l_m}-e^{-l_m}}\,,
\]
and for $m\ge2$ with $l_m\ge2$ it follows that
\[
  \norm{\mu-\mu(\cdot+p_m)}_{[-mp_m,(m-1)p_m]}
  = \frac{2\cdot 2^m-2\cdot 2^{-m}}{e^{l_m}-e^{-l_m}}
  \le 4 \mkern1mu e^{m\ln2-l_m}.
\]

% To achieve that $\mu$ is indeed a weak Gordon measure, we choose the sequence $(l_k)$ such that
% \[
%   \frac{2(k-1)p_{k-1}+k\ln 2}{l_k} \to 0.
% \]
For $C<1$ we infer by~\eqref{eq:lk-cond} that
\[
  e^{Cp_m}\norm{\mu-\mu(\cdot+p_m)}_{[-mp_m,(m-1)p_m]} \le 4 e^{C2(m-1)p_{m-1} + m\ln 2 - (1-C)l_m}\to 0.
\]
%Since
%\[
%  \norm{\mu-\mu(\cdot+p_m)}_{[-p_m,p_m]} = \norm{\mu-\mu(\cdot+p_m)}_{[-mp_m,(m-1)p_m]},
%\]
%we conclude that
In particular,
$\mu$ is a weak Gordon measure with weight $C$, and hence $C_\mu\geq 1$.

Furthermore we have $\inf_{r>0} \norm{\mu}_{\lu,r} = 0$
%\[
%  \lim_{T\to\infty} \sup\set{\tfrac{1}{T+t-s}\abs{\mu}((s,t]) ;\; t>s} = 0
%\]
since $\mu\in \M$ and 
%there exist arbitrarily long intervals where $\mu$ is zero. Indeed, we have
%\[\frac{1}{t-s}\abs{\mu}((s,t]) \le \frac{1}{t-s}\norm{\mu}_\lu \abs{\spt\mu \cap (s,t]}\]
%for all $s<t$. Then one has to note that
\[
  \rfrac1r \sup_{a\in\R} \#\bigl(\spt\mu \cap (a,a+r]\bigr) \to 0
\]
as $r\to\infty$, where $\#$ denotes the counting measure. Thus, $E_\mu = C_\mu^2 = 1$.

\bigskip
\noindent
Christian Seifert and Hendrik Vogt\\
Institut f\"ur Mathematik \\
Technische Universit\"at Hamburg-Harburg\\
21073 Hamburg, Germany \\
{\tt christian.se\rlap{\textcolor{white}{hugo@egon}}ifert@tu-\rlap{\textcolor{white}{darmstadt}}harburg.de}\\
{\tt hendrik.vo\rlap{\textcolor{white}{hugo@egon}}gt@tu-\rlap{\textcolor{white}{darmstadt}}harburg.de}

\end{document}